\newtheorem{theorem}{Theorem}
\newtheorem{lemma}[theorem]{Lemma}
\newtheorem{conjecture}[theorem]{Conjecture}
\newtheorem{definition}[theorem]{Definition}
\newtheorem{proposition}[theorem]{Proposition}
\newtheorem{remark}[theorem]{Remark}
\newtheorem{fha}[theorem]{Fundamental Heuristic Assumption}
\newcommand{\N}{\mathbb{N}}
\newcommand{\Z}{\mathbb{Z}}
\newcommand{\Q}{\mathbb{Q}}
\newcommand{\F}{\mathbb{F}}
\newcommand{\cF}{\mathcal{F}}
\DeclareMathOperator{\SL}{SL}
\newcommand{\sltz}{\SL_2(\Z)}
\DeclareMathOperator{\Ind}{Ind}
\DeclareMathOperator{\Gal}{Gal}
\DeclareMathOperator{\GL}{GL}
\DeclareMathOperator{\CL}{CL}
\DeclareMathOperator{\Aut}{Aut}
\newcommand{\Magma}{{\sc Magma}}
\newcommand{\mat}[4]{
 \left(  \begin{smallmatrix} #1 & #2 \\ #3 & #4 \end{smallmatrix} \right)}
\newcommand{\natden}[2]{\Delta(#1,#2)}
\newcommand{\ppp}[1]{\pi^{(#1)}}
\newcommand{\pp}[1]{\pi_{(#1)}}
\newcommand{\Ab}[1]{\underline{\mathrm{Ab}}_{(#1)}}
\newcommand{\Abp}[1]{\underline{\mathrm{Ab}}^{(#1)}}
\title{On mod~$p$ representations which are defined over~$\F_p$: II}
\author{L.\ J.\ P.\ Kilford and Gabor Wiese}
\begin{document}

\maketitle

\begin{abstract}
The behaviour of Hecke polynomials modulo~$p$ has been the subject of 
some study. In this note we show that, if~$p$ is a prime, the set of integers~$N$ such that the Hecke polynomials~$T^{N,\chi}_{\ell,k}$ for all primes~$\ell$, all weights~$k \ge 2$ and all characters~$\chi$ taking values in~$\{\pm 1\}$ splits completely modulo~$p$ has density~0, unconditionally for~$p=2$ and under the Cohen-Lenstra heuristics for~$p \ge 3$.
The method of proof is based on the construction of suitable dihedral modular forms.

2000 Mathematics Subject Classification: 11F33 (primary); 11F25, 11R29.
\end{abstract}

\section{Introduction}

Let~$N$ and~$k$ be positive integers and let~$\ell$ and~$p$ be prime numbers. We will let~$S_k(\Gamma_0(N),\chi)$ be the space of holomorphic cusp forms of integer weight~$k$ for the congruence subgroup~$\Gamma_0(N)$ and the Dirichlet character~$\chi$ of modulus~$N$, and we will define~$T_{\ell,k}^{N,\chi}$ to be the characteristic polynomial of the Hecke operator~$T_\ell$ acting on~$S_k(\Gamma_0(N),\chi)$. We will call this polynomial the \emph{Hecke polynomial}.

We recall that for modular forms in characteristic~0, there is a 
well-known conjecture (Maeda's conjecture) that says that the 
characteristic polynomials of the Hecke operators acting on modular forms 
for the full modular group~$\sltz$ are irreducible: 
\begin{conjecture}[Maeda's Conjecture]
Let~$k$ be a positive integer, and let~$\ell$ be a prime number.
The Hecke polynomial~$T^{1,1}_{\ell,k} \in \Z[X]$ is irreducible, with Galois group~$S_n$, where~$n$ is the dimension of~$S_k(\sltz)$ as a complex vector space.
\end{conjecture}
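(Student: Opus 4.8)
This is Maeda's conjecture, which is open in general, so what follows is the strategy one would attempt rather than a complete argument. The statement splits into irreducibility of $T^{1,1}_{\ell,k}$ and the identification of its Galois group with $S_n$, and I would address these in that order, since transitivity (which follows from irreducibility) is needed before the full group can be pinned down. For irreducibility the useful reformulation is in terms of the Hecke algebra $\mathbb{T}_k = \Z[T_\ell : \ell \text{ prime}]$ acting on $S_k(\sltz)$: since (by the Hecke pairing together with multiplicity one) $S_k(\sltz)$ is free of rank one over $\mathbb{T}_k \otimes \Q$, the polynomial $T^{1,1}_{\ell,k}$ is the characteristic polynomial of the image of $T_\ell$ in $\mathbb{T}_k \otimes \Q$, hence a power of its minimal polynomial; so it is irreducible of degree $n$ precisely when (a) $\mathbb{T}_k \otimes \Q$ is a field and (b) $T_\ell$ generates that field. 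Point (b) can fail for sporadic $\ell$ even when Maeda holds, so one would prove it for all but finitely many $\ell$ and treat the rest separately. To establish (a) --- equivalently, that the level-$1$ weight-$k$ newforms form a single $\Gal(\overline{\Q}/\Q)$-orbit --- I would try to contradict a hypothetical splitting $\mathbb{T}_k \otimes \Q \cong A \times B$ through a congruence obstruction: such a splitting forces systematic congruences between the two blocks at the primes dividing the congruence ideal of $\mathbb{T}_k$, and one would hope to rule these out using the known structure of the mod-$p$ Hecke algebra in level $1$ (Serre, Tate, Jochnowitz, Khare) --- for instance via the nilpotence filtration and the scarcity of level-$1$ congruences. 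An alternative is to exploit that along Hida or Coleman families the number of Galois orbits is lower semicontinuous in the weight, and to try to propagate the conjecture from one accessible weight in each family; but the specialization maps are not understood well enough for this to go through.

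For the Galois group, grant irreducibility so that $G = \Gal(T^{1,1}_{\ell,k})$ is transitive of degree $n$. The plan is the classical one: exhibit in $G$ a transposition together with an $(n-1)$-cycle. An $(n-1)$-cycle in a transitive group forces $2$-transitivity, hence primitivity, and a primitive group containing a transposition is $S_n$ by Jordan's theorem. Both cycle types would be produced as Frobenius conjugacy classes: the factorization type of $T^{1,1}_{\ell,k}$ modulo an auxiliary prime $q$ of good reduction --- equivalently the splitting type of $q$ in the Hecke eigenvalue field --- reads off the cycle structure of $\Frob_q$ in $G$, and that splitting type is governed by the local behaviour at $q$ of the Galois representations attached to the eigenforms. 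Chebotarev then guarantees the needed cycle types for a positive density of $q$; the catch is that this requires knowing $G$ already, so for an unconditional proof one would need an a priori mechanism producing suitable $q$, uniformly in $k$, and none is known.

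\textbf{Where it breaks.} The obstruction is that we have essentially no control over the level-$1$ weight-$k$ Hecke eigenvalue field as $k \to \infty$ (its degree grows like $n \sim k/12$), nor over the images of the attached Galois representations for all relevant eigenforms simultaneously; so neither the ``single orbit'' input for (a) nor the ``enough cycle types'' input for the Galois group can at present be supplied in general. In practice Maeda's conjecture is verified computationally for $k$ in a large range, and in the present paper it enters only as motivation: the unconditional ($p=2$) and Cohen--Lenstra-conditional ($p \ge 3$) results announced in the abstract concern the mod-$p$ splitting behaviour of the polynomials $T^{N,\chi}_{\ell,k}$ and do not rely on it.
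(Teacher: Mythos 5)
The statement you were asked about is Maeda's conjecture, which the paper states only as a conjecture and never proves --- it is an open problem, used purely as motivation for the mod-$p$ questions that the paper actually addresses. So there is no proof in the paper to compare against, and your decision to present a strategy together with an honest account of where it breaks, rather than to claim a proof, is exactly right; your closing observation that the paper's actual results on $T^{N,\chi}_{\ell,k}$ mod $p$ do not depend on the conjecture is also correct. Your sketch itself is the standard line of attack (reduce irreducibility to ``one Galois orbit of newforms plus $T_\ell$ generating the eigenvalue field'', then get $S_n$ from a transposition and an $(n-1)$-cycle via Jordan's theorem, with Frobenius factorization types supplying the cycle shapes), and the obstructions you name are the real ones. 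One small inaccuracy: since $\mathbb{T}_k \otimes \Q$ is in general only a product of number fields, the characteristic polynomial of $T_\ell$ is a product of powers of the minimal polynomials of its images in the various factors, not necessarily a power of a single minimal polynomial; your criterion (a) plus (b) for irreducibility is nevertheless the correct one. Note also that the known partial results you allude to (Buzzard, Conrey--Farmer--Wallace, Baba--Murty, Ahlgren, cited in the paper) give verification for specific weights and propagation of irreducibility between Hecke operators, not progress toward the general statement.
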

This conjecture lends itself to numerical verification. Methods introduced in~\cite{buzzard-t2} prove that certain Hecke polynomials are irreducible and have full Galois group, and results such as those in~\cite{conrey-farmer-wallace}, \cite{baba-murty} and~\cite{ahlgren-new} show that if a certain~$T^{1,1}_{\ell,k}$ is irreducible then other~$T^{1,1}_{r,k}$ must be irreducible also. 

In the characteristic~$p$ case, however, things are obviously different. The paper~\cite{kilford-mod-p}, using methods developed 
in~\cite{conrey-farmer-wallace}, gives a list of spaces of modular forms for which one can prove that all of the Hecke 
polynomials~$T^{N,1}_{\ell,k}$ split into linear factors modulo~$p$. It is then asked whether these are all such spaces. In this paper we will give at least a partial answer to that question, which depends for odd primes~$p$ on the Cohen-Lenstra heuristics on class groups of imaginary quadratic fields.

\begin{theorem}
\label{main-theorem}
Let $p$ be a prime; if $p \ge 3$, assume the Cohen-Lenstra heuristics. Then the set of integers~$N$ such that the Hecke polynomials~$T^{N,\chi}_{\ell,k}$ for all primes~$\ell$, all weights $k \ge 2$ and
all characters~$\chi: (\Z/N\Z)^\times \to \{\pm 1\} \subseteq \F_p^\times$
split completely modulo~$p$ has density~0.
\end{theorem}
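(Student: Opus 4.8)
The plan is to prove the contrapositive in the following strong form: for all $N$ outside a set of density $0$ there is a single triple $(\ell,k,\chi)$ with $k\ge 2$ and $\chi$ quadratic for which $T^{N,\chi}_{\ell,k}$ has a root in $\overline{\F_p}\setminus\F_p$, hence does not split completely modulo~$p$. The forms producing such roots will be dihedral. Let $K=\Q(\sqrt{-D})$ with $D\equiv 3\ (4)$ a prime distinct from~$p$, let $\psi$ be a character of $\CL(K)$ of order $n>2$, and let $\theta_\psi=\sum_{\mathfrak a}\psi(\mathfrak a)q^{N\mathfrak a}\in S_1(\Gamma_0(D),\chi_{-D})$ be the associated theta series. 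For a prime $\ell$ split in $K$, say $(\ell)=\mathfrak l\overline{\mathfrak l}$, the Hecke eigenvalue of $\theta_\psi$ is $a_\ell=\psi(\mathfrak l)+\psi(\mathfrak l)^{-1}$, and by Chebotarev for ideal classes $\psi(\mathfrak l)$ ranges over all of $\mu_n$ as $\ell$ varies, even over primes avoiding any fixed finite set. If $\zeta\in\mu_n$ has full order, then modulo a prime $\mathfrak p\mid p$ the element $\zeta+\zeta^{-1}$ lies in $\F_p$ if and only if $p$ has order $\le 2$ modulo the prime-to-$p$ part $m$ of $n$, i.e.\ if and only if $m\mid p^2-1$. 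So the arithmetic heart of the matter is to produce, for density-one many $N$, a prime divisor $D$ of $N$ for which $\CL(\Q(\sqrt{-D}))$ has a character whose order has prime-to-$p$ part not dividing $p^2-1$.

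Granting such a $D\mid N$ and such a $\psi$, here is how I would build the required Hecke polynomial. Since $\theta_\psi$ has weight $1$, I would reduce it modulo~$\mathfrak p$, multiply by the Hasse invariant to get a mod~$p$ cusp eigenform of weight~$p\ (\ge 2)$ and level $\Gamma_0(D)$ with the same $T_\ell$-eigenvalues for $\ell\neq p$, and invoke the lifting theorem of Deligne and Serre to obtain a classical eigenform $g\in S_p(\Gamma_0(D),\varepsilon)$ with $a_\ell(g)\equiv\psi(\mathfrak l)+\psi(\mathfrak l)^{-1}\pmod{\mathfrak p}$ for all $\ell$ split in $K$; here $\varepsilon$ can be taken to be $\chi_{-D}$ itself when $p$ is odd (it has order dividing $2$, prime to~$p$) and to be trivial when $p=2$, so that in either case $\varepsilon$ is among the characters allowed in the statement and has conductor dividing~$N$. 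Now pick, by Chebotarev, a prime $\ell\nmid Np$ split in $K$ with $\psi(\mathfrak l)$ of full order $n$; then $a_\ell(g)\bmod\mathfrak p\notin\F_p$. Finally, because $D\mid N$ and $\ell\nmid N$, the eigenform $g$ together with its Galois conjugates contributes, via the degeneracy maps, oldforms to $S_p(\Gamma_0(N),\varepsilon)$ on which $T_\ell$ acts with characteristic polynomial a power of the minimal polynomial of $a_\ell(g)$ over~$\Q$. Hence this minimal polynomial divides $T^{N,\varepsilon}_{\ell,p}\in\Z[X]$, and reducing modulo~$p$ shows $T^{N,\varepsilon}_{\ell,p}$ has a root outside~$\F_p$, as required.

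It remains to establish the arithmetic input, and this is where I expect the real difficulty to lie. Let $\mathcal G$ be the set of primes $D\equiv 3\ (4)$ such that $\CL(\Q(\sqrt{-D}))$ carries a character of the required order; the set of $N$ with no divisor in $\mathcal G$ has density $\prod_{D\in\mathcal G}(1-1/D)$, which is $0$ precisely when $\sum_{D\in\mathcal G}1/D$ diverges, and every $N$ all of whose Hecke polynomials split modulo~$p$ lies in this set. For $p\ge 3$ I would fix once and for all a prime $q$ dividing neither $p$ nor $p^2-1$; then any $K$ with $q\mid h(K)$ has a character of order $q$, which is of the required kind, so $\mathcal G\supseteq\{D\ \text{prime}:\ D\equiv 3\ (4),\ q\mid h(\Q(\sqrt{-D}))\}$, and the Cohen--Lenstra heuristics predict that this last set has positive density among the primes, giving the divergence. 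For $p=2$ one proceeds unconditionally: genus theory shows $h(\Q(\sqrt{-D}))$ is odd for $D\equiv 3\ (4)$ prime, so the only bad $D$ are those for which $\CL(\Q(\sqrt{-D}))$ is an elementary abelian $3$-group; the cases $h\in\{1,3\}$ account for only finitely many $D$ by the classical solution of the class number problem, and the remaining bad $D$ all have $3$-rank at least~$2$, a condition thin enough that $\mathcal G$ retains a divergent reciprocal sum. The modular-forms construction above is uniform in~$D$, so the main obstacle really is this gap between the class-group statistics that are available unconditionally — enough for $p=2$ — and those that must be assumed, namely Cohen--Lenstra, for $p\ge 3$.
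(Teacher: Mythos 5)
Your first two paragraphs are essentially the paper's construction: a dihedral theta series attached to a class group character of $\Q(\sqrt{-D})$ whose order has prime-to-$p$ part $m$ not dividing $p^2-1$, the observation that $\zeta+\zeta^{-1}\bmod{\mathfrak p}$ then falls outside $\F_p$, a weight adjustment to reach $k\ge 2$, passage to level $N$ by degeneracy maps, and the trivial-character trick at $p=2$. (One slip there: ``$\zeta+\zeta^{-1}\in\F_p$ if and only if $m\mid p^2-1$'' is false as an equivalence --- for $p=5$ and $m=24$ the trace of a primitive $24$-th root lies in $\F_{25}\setminus\F_5$ --- but the implication you actually use, namely $m\nmid p^2-1\Rightarrow$ trace $\notin\F_p$, is the correct one, and is what the paper proves.) Your Euler-product observation that the integers with no prime factor in a set $\mathcal G$ of primes have density $0$ once $\sum_{D\in\mathcal G}1/D$ diverges is also fine (the finite-truncation upper bound suffices). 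The genuine gaps are in the arithmetic input of your third paragraph.

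For $p=2$, the assertion that the prime discriminants $D\equiv 3\pmod 4$ whose class group has exponent $3$ and $3$-rank at least $2$ are ``thin enough that $\mathcal G$ retains a divergent reciprocal sum'' is not justified: the solutions of the class number $1$ and $3$ problems say nothing about such fields, and you cite no unconditional bound on their number (Davenport--Heilbronn-type averages restricted to prime discriminants are themselves nontrivial). The statement that closes this hole is the theorem of Boyd--Kisilevsky and Weinberger that there are only finitely many imaginary quadratic fields whose class group has exponent $3$ (ineffective, via Siegel); this is exactly what the paper invokes, and with it your case distinction becomes unnecessary. For odd $p$ the gap is more structural: you need a positive proportion of \emph{prime} discriminants $D\equiv 3\pmod 4$ with $q\mid \#\CL(\Q(\sqrt{-D}))$, but the Cohen--Lenstra heuristics as standardly formulated --- and as assumed in the theorem, namely a density statement over \emph{all} fundamental discriminants for functions of the odd part of the class group --- do not yield statistics along prime discriminants, since ``$D$ is prime'' is not a function of the class group. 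So your conditional step silently replaces the assumed heuristic by a stronger prime-discriminant variant. The paper avoids this by allowing arbitrary squarefree divisors $d\equiv 3\pmod 4$ of $N$, at the cost of replacing your one-line Euler-product argument by the density bookkeeping of its Section~4 (the lemmas on natural densities and the passage from fundamental discriminants to a density-one set of levels); if you insist on prime divisors you must either assume that variant explicitly or redo that transfer.
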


It should be pointed out that for given~$p$ and given level~$N$ the weight of an eigenform over~$\overline{\F}_p$ can always be adjusted to lie between $2$ and $p^2 - 1$.

It should also be noted that a natural generalization of Maeda's conjecture in characteristic~0 to congruence subgroups cannot be true in general; in~\cite{koo-2007} a Hecke eigenform of weight~$2$ on $\Gamma_0(63)$ is exhibited such that for a set of primes~$\ell$ of positive density the characteristic polynomial of the Hecke operator $T_\ell$ acting on the span of all the Galois conjugates of the form is reducible. In other words, the $\ell$-th coefficient does not generate the whole coefficient field for a set of primes~$\ell$ of positive density. This phenomenon is due to the existence of a nontrivial inner twist. Even in the absence of nontrivial inner twists, numerical evidence suggests that there exist examples where the set of such~$\ell$ is still infinite, although of density~$0$.

Theorem~\ref{main-theorem} will be proved in Section~\ref{sec:htwo} for $p=2$ and
in Section~\ref{sec:hp} for odd~$p$. It will be derived from a statement
on the class groups of imaginary quadratic fields which implies the existence of
dihedral modular forms mod~$p$ whose coefficient fields are not the prime field~$\F_p$
(see Section~\ref{sec:dih}).

One can imagine other ways for constructing mod~$p$ eigenforms with
$q$-expansions not in~$\F_p$. For instance, one could use families of
hyperelliptic curves of genus greater than~$1$ whose Jacobians are of
$\GL_2$-type in order to treat those primes~$p$ that have a nontrivial residue
degree in the endomorphism algebra of the Jacobian tensored with~$\Q$ (which
is the coefficient field of the corresponding holomorphic eigenform).
However, it does not seem obvious how to obtain the desired density~$1$
statement on the levels.

Moreover, techniques of level raising etc., as they are for instance used
in~\cite{dieulefait-wiese}, also easily yield mod~$p$ Hecke eigenforms having a
nontrivial coefficient field. However, the levels will always contain at
least a square, excluding a density~$1$ statement.

\section{Dihedral Galois representations and Hypothesis~$H_p$}\label{sec:dih}

\begin{definition}
Let $p$ be a prime number.
An abelian group~$C$ is called \emph{$p$-suitable} if $G$ has a cyclic quotient of order~$h$
such that $p \nmid h$ and $h \nmid (p^2-1)$.
\end{definition}

The condition of $p$-suitability is equivalent to the existence of
a cyclic quotient $H$ of~$G$ such that $H$ is isomorphic to
a subgroup of $\overline{\F}_p^\times$ but not to a subgroup of $\F_{p^2}^\times$.

We now prove the following results about $p$-suitable groups.

\begin{proposition}\label{prop:dihedral}
Let $G$ be a $p$-suitable abelian group and let $H$ be a cyclic quotient
of~$G$ of order~$h$ such that $H \subset \overline{\F}_p^\times$ without being
isomorphic to a subgroup of $\F_{p^2}^\times$.
Then the group
$$D = \langle \mat 0110, \mat x00{x^{-1}} \; \mid \; x \in H \rangle \subseteq \GL_2(\overline{\F}_p)$$
is isomorphic to the dihedral group $D_h$ of order $2h$ and not all the traces of elements
of~$D$ lie in~$\F_p$.
\end{proposition}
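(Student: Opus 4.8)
The plan is to argue directly with the two generators. Write $w = \mat{0}{1}{1}{0}$ and, for $x \in H$, $d(x) = \mat{x}{0}{0}{x^{-1}}$, so that $D = \langle w,\, d(x) : x \in H \rangle$. First I would note that $x \mapsto d(x)$ is an injective homomorphism from $H$ into the diagonal torus of $\GL_2(\overline{\F}_p)$ (injective because $d(x) = I$ forces $x = 1$), so its image $T$ is a cyclic group of order $h$, isomorphic to $H$. Next I would record the two relations $w^2 = I$ and $w\, d(x)\, w^{-1} = d(x^{-1})$ for all $x \in H$; both are immediate $2 \times 2$ matrix products. Since $w$ is anti-diagonal whereas every element of $T$ is diagonal, $w \notin T$; hence $D$ consists of exactly the $2h$ matrices $d(x)$ and $w\, d(x)$ ($x \in H$), its multiplication is the standard dihedral one, and, choosing a generator $g$ of $H$, the pair $(d(g), w)$ realizes the presentation $\langle r, s \mid r^h = s^2 = 1,\ srs^{-1} = r^{-1} \rangle$ of $D_h$. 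This gives $D \cong D_h$.

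For the trace assertion I would compute $\mathrm{tr}(d(x)) = x + x^{-1}$ and $\mathrm{tr}(w\, d(x)) = 0$ for every $x \in H$, so it is enough to find a single $x \in H$ with $x + x^{-1} \notin \F_p$. Take $x = g$, a generator of $H$, and suppose to the contrary that $g + g^{-1} \in \F_p$. Then $g$ is a zero of $X^2 - (g + g^{-1})X + 1 \in \F_p[X]$, so $g \in \F_{p^2}^\times$; but then $H = \langle g \rangle \subseteq \F_{p^2}^\times$, and therefore $h = |H|$ divides $p^2 - 1$, contradicting the hypothesis that $G$ is $p$-suitable. Hence $d(g) \in D$ has trace $g + g^{-1} \notin \F_p$, as required.

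No step is really an obstacle: the proof is a short chain of matrix identities together with one contradiction argument. The only place where the hypotheses enter essentially is this last step, where the condition $h \nmid (p^2 - 1)$ is precisely what keeps $g$ — and with it all of $H$ — out of $\F_{p^2}$, and hence keeps $g + g^{-1}$ out of $\F_p$. A small point worth checking along the way is that $w \notin T$, so that $|D| = 2h$ rather than $h$; one may also note that $p \nmid h$ and $h \nmid (p^2 - 1)$ force $h \notin \{1, 2\}$, so $D_h$ is the genuine non-abelian dihedral group, although the statement itself only needs the isomorphism type.
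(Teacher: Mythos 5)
Your proof is correct and follows essentially the same route as the paper's: the conjugation relation $w\,d(x)\,w^{-1}=d(x^{-1})$ gives the dihedral structure, and the observation that $x+x^{-1}\in\F_p$ would make $x$ a root of $X^2-aX+1\in\F_p[X]$, hence put $x$ (and so all of $H$) in $\F_{p^2}^\times$, contradicts the hypothesis. You simply spell out details the paper leaves implicit (injectivity of $x\mapsto d(x)$, $w\notin T$, working with a generator of $H$), so no further comment is needed.
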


\begin{proof}
Let $x \in H$.
One just needs to observe that conjugation by $\mat 0110$ maps $\mat x00{x^{-1}}$
to $\mat {x^{-1}}00x$ in order to see that $D$ is actually isomorphic to~$D_h$.
Suppose that $x+x^{-1} = a \in \F_p$. Then $x$ is a root of the polynomial $X^2-aX + 1 \in \F_p[X]$
and consequently $x \in \F_{p^2}$. The assumption excludes that this happens for all~$x \in H$.
\end{proof}

\begin{proposition}\label{prop:rep}
Let $K/\Q$ be an imaginary quadratic field of discriminant~$d$ and let $C$ be its class group.
If $C$ is $p$-suitable,
then there exists an irreducible odd dihedral Galois representation
$\rho: \Gal(\overline{\Q}/\Q) \to \GL_2(\overline{\F}_p)$ of conductor~$| d |$ such that not all
its traces lie in~$\F_p$.
\end{proposition}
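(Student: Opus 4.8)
The plan is to build $\rho$ as an induced representation from the quadratic field $K$. The class group $C = \CL(K)$ is, by class field theory, isomorphic to $\Gal(H/K)$ where $H$ is the Hilbert class field of $K$. Since $C$ is $p$-suitable, we may choose a cyclic quotient $H_0$ of $C$ of order $h$ with $p \nmid h$ and $h \nmid (p^2-1)$; via the embedding $\mu_h \hookrightarrow \overline{\F}_p^\times$ (which exists because $p \nmid h$) this gives a character $\chi: \Gal(\overline{\Q}/K) \to \overline{\F}_p^\times$ of order exactly $h$, unramified everywhere, and taking values in a cyclic subgroup of $\overline{\F}_p^\times$ not contained in $\F_{p^2}^\times$. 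Then I would set $\rho = \Ind_{\Gal(\overline{\Q}/K)}^{\Gal(\overline{\Q}/\Q)} \chi$.

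The next step is to verify the three asserted properties of $\rho$. \emph{Irreducibility:} an induced representation $\Ind \chi$ of a character from an index-$2$ subgroup is irreducible precisely when $\chi \neq \chi^\sigma$, where $\sigma$ generates $\Gal(K/\Q)$; since complex conjugation acts on the class group by inversion, $\chi^\sigma = \chi^{-1}$, so $\chi = \chi^\sigma$ would force $\chi^2 = 1$, i.e.\ $h \mid 2$, contradicting $h \nmid (p^2-1)$ (as $2 \mid p^2 - 1$ for $p$ odd, and for $p=2$ one checks $h\nmid 3$ rules out $h\le 2$). \emph{Oddness:} complex conjugation $c$ lies outside $\Gal(\overline{\Q}/K)$ since $K$ is imaginary, so in the standard model of the induced representation $\rho(c)$ is conjugate to $\mat 0110$ composed with a diagonal matrix, hence has determinant $-1$; thus $\rho$ is odd. \emph{Conductor and traces:} since $\chi$ is unramified outside the ramification of $K/\Q$, the conductor of $\rho$ is $|d|$ (the conductor-discriminant formula gives $\mathrm{cond}(\rho) = |d_K| \cdot N_{K/\Q}(\mathfrak{f}_\chi) = |d|$ as $\mathfrak{f}_\chi = 1$). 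For the traces, the image of $\rho$ is exactly the dihedral group $D$ of Proposition~\ref{prop:dihedral} attached to the cyclic group $H = \mathrm{image}(\chi) \subset \overline{\F}_p^\times$, which by that proposition has an element with trace outside $\F_p$; concretely $\mathrm{tr}\,\rho(\Frob_{\mathfrak{q}}) = \chi(\mathfrak{q}) + \chi(\mathfrak{q})^{-1}$ for $\mathfrak{q}$ a prime of $K$ split over $\Q$, and by Proposition~\ref{prop:dihedral} not all such values lie in $\F_p$.

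The main obstacle, and the only point needing genuine care, is making sure the character $\chi$ really has order exactly $h$ after composing the surjection $C \twoheadrightarrow H_0 \cong \Z/h\Z$ with a faithful embedding $\Z/h\Z \hookrightarrow \overline{\F}_p^\times$: this requires $p \nmid h$, which is part of $p$-suitability, so a primitive $h$-th root of unity exists in $\overline{\F}_p$ and the composite is injective on $H_0$. One then needs the elementary observation that $H = \chi(\Gal(\overline{\Q}/K))$ is cyclic of order $h$ and, by $h \nmid (p^2 - 1)$, is not contained in $\F_{p^2}^\times$, which is exactly the hypothesis feeding Proposition~\ref{prop:dihedral}. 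Everything else—irreducibility, oddness, the conductor computation—is a routine application of the theory of induced representations and the conductor-discriminant formula, so I would state those briefly and cite a standard reference for the conductor of an induced representation.
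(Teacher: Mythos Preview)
Your proposal is correct and follows essentially the same route as the paper: both construct $\rho$ as the induction of an unramified character of $\Gal(\overline{\Q}/K)$ arising from a cyclic quotient of the class group, identify the image with the dihedral group $D$ of Proposition~\ref{prop:dihedral}, and read off oddness, the conductor, and the trace statement from there. Your write-up is in fact more explicit than the paper's own proof, which leaves irreducibility implicit and refers to~\cite{wiese} for details on the conductor.
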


\begin{proof}
Let $H$ and~$h$ be as in Proposition~\ref{prop:dihedral}.
Note that $\Gal(K/\Q)$ acts on~$C$ and, hence, also on~$H$ by inversion.
We now view $H$ as an unramified character
$\chi: \Gal(\overline{\Q}/K) \to \overline{\F}^\times$ of order~$h$.
We choose $\sigma$ to be a lift to $\Gal(\overline{\Q}/\Q)$ of the nontrivial
element of~$\Gal(K/\Q)$. Then we have that
$\chi(\sigma \tau \sigma^{-1}) = \chi(\tau^{-1}) = \chi(\tau)^{-1}$ for any $\tau \in \Gal(\overline{\Q}/K)$.
This means that we can identify the group $D$ from Proposition~\ref{prop:dihedral}
in a natural way with the image of the irreducible Galois representation
$\Ind_{\Gal(\overline{\Q}/K)}^{\Gal(\overline{\Q}/\Q)}(\chi)$. This Galois representation
is odd, since complex conjugation plays the role of~$\sigma$ and consequently has
determinant~$-1$. Moreover, a well-known formula gives the conductor.
For more details, see~\cite{wiese}.
\end{proof}

If $M \subset N$ are two sets of natural numbers, then we say that
\emph{$M$ has density $\alpha = \natden MN$ in~$N$} if the limit for $x \to \infty$ of
\[
\frac{\#\{m \in M|\,m \le x\}}{\#\{m \in N|\,m \le x\}}
\]
exists, and is equal to~$\alpha$.

We will also introduce some notation for class groups. We denote by~$\CL(\Q(\sqrt{-d}))$ the class group of the imaginary quadratic field~$\Q(\sqrt{-d})$.

Let $p$ be a prime. We consider the following Hypothesis, which we denote by
$(H_p)$.
\begin{quote}
The density of the set
$$ \{ N \in \N \;|\; \exists\, d \in \N \textrm{ squarefree}, d \equiv 3 \mod 4, d \mid N,
 \CL(\Q(\sqrt{-d})) \textrm{ is $p$-suitable}\}$$
exists and is~$1$.
\end{quote}

We will establish $(H_2)$ by a result on the exponent of class groups.
Unfortunately, we do not know of any way of proving $(H_p)$ for odd~$p$, but
we shall show that $(H_p)$ is a consequence of the Cohen-Lenstra heuristics.

\begin{proposition}
Assume Hypothesis $(H_p)$. Then the conclusion of Theorem~\ref{main-theorem} is true.
\end{proposition}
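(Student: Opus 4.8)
The plan is to establish, equivalently, that the exceptional set $\mathcal E$ of Theorem~\ref{main-theorem} --- the set of $N$ for which \emph{every} polynomial $T^{N,\chi}_{\ell,k}$ (with $\ell$ prime, $k\ge 2$, and $\chi\colon(\Z/N\Z)^\times\to\{\pm1\}$) splits completely modulo~$p$ --- is contained in $\N\setminus A$, where $A$ is the density-$1$ set furnished by Hypothesis~$(H_p)$. Granting this containment, $\#\{n\in\mathcal E:n\le x\}\le\#\{n\notin A:n\le x\}$, and after dividing by $\#\{n\le x\}$ the right-hand side tends to $0$ by $(H_p)$; hence $\natden{\mathcal E}{\N}$ exists and equals~$0$, which is exactly Theorem~\ref{main-theorem}. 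So everything reduces to: for each $N$ admitting a squarefree $d\equiv 3\bmod 4$ with $d\mid N$ and $\CL(\Q(\sqrt{-d}))$ $p$-suitable, at least one Hecke polynomial $T^{N,\chi}_{\ell,k}$ fails to split completely mod~$p$.

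Fix such $N$ and $d$ and set $K=\Q(\sqrt{-d})$, which has discriminant $-d$ since $d$ is squarefree and $\equiv 3\bmod 4$. Proposition~\ref{prop:rep} applied to $K$ gives an irreducible odd dihedral representation $\rho\colon\Gal(\overline{\Q}/\Q)\to\GL_2(\overline{\F}_p)$ of conductor~$d$ not all of whose traces lie in~$\F_p$. First I would record two facts about~$\rho$. Writing $\rho=\Ind_{\Gal(\overline{\Q}/K)}^{\Gal(\overline{\Q}/\Q)}(\chi)$ with $\chi$ unramified everywhere, the determinant of $\rho$ is the product of the quadratic character $\chi_K$ cutting out $K$ with the transfer of~$\chi$; the transfer is an everywhere-unramified Dirichlet character, hence trivial, so $\det\rho=\chi_K$, a $\{\pm1\}$-valued character of conductor dividing~$d$. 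Also, since $\rho$ factors through a finite Galois group and not all its traces are in $\F_p$, Chebotarev produces a prime $\ell_0\nmid Np$ with $\operatorname{tr}\rho(\Frob_{\ell_0})\notin\F_p$.

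The next step is to realise $\rho$ by a characteristic-zero eigenform of controlled level, weight and nebentypus. Dihedral Galois representations are modular in the strong (weight-and-level) form of Serre's conjecture --- this is classical; see~\cite{wiese} --- and, as noted after Theorem~\ref{main-theorem}, the relevant weight may be taken between $2$ and $p^2-1$. This yields a normalized cuspidal Hecke eigenform $g$ of some weight $k$ with $2\le k\le p^2-1$, of level $M$ dividing $d$ (so $M\mid N$), and of nebentypus the prime-to-$p$ part $\chi'$ of $\chi_K$ --- again a $\{\pm1\}$-valued character, which we regard modulo~$N$ --- whose attached mod-$p$ Galois representation is~$\rho$; in particular, fixing a prime $\mathfrak p\mid p$ of the coefficient field of $g$, one has $a_\ell(g)\equiv\operatorname{tr}\rho(\Frob_\ell)\pmod{\mathfrak p}$ for all $\ell\nmid Np$. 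Via the degeneracy map $S_k(\Gamma_0(M),\chi')\hookrightarrow S_k(\Gamma_0(N),\chi')$, the form $g$ gives a $T_{\ell_0}$-eigenform in $S_k(\Gamma_0(N),\chi')$ with eigenvalue $a_{\ell_0}(g)$ (using $\ell_0\nmid N$). Hence $a_{\ell_0}(g)$ is a root of $T^{N,\chi'}_{\ell_0,k}\in\Z[X]$, so $a_{\ell_0}(g)\bmod\mathfrak p$ is a root of $T^{N,\chi'}_{\ell_0,k}\bmod p$; as this root lies in $\overline{\F}_p\setminus\F_p$, the polynomial $T^{N,\chi'}_{\ell_0,k}\bmod p$ has an irreducible factor of degree~$\ge 2$ and therefore does not split completely modulo~$p$. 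Taking $\chi=\chi'$, this is the desired statement.

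I expect the real obstacle to be precisely the production of~$g$: a characteristic-zero eigenform whose level \emph{divides}~$d$ (in particular is squarefree) and whose weight is $\ge 2$. The obvious classical object attached to the $p$-suitability data is the weight-one theta series of the order-$h$ character of the class group; it has the right level~$d$ but the wrong weight, and passing to CM newforms of weight $\ge 2$ forces a square into the level --- the very phenomenon the authors flag for level-raising and for hyperelliptic constructions --- which would wreck a density-$1$ statement about the levels. The indispensable input is thus the mod-$p$ companion-form / weight-lifting theory for dihedral forms, which moves the weight-one dihedral form to a weight in $\{2,\dots,p^2-1\}$ at the same level while keeping a characteristic-zero lift (cf.~\cite{wiese}). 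Against that, the determinant computation, the Chebotarev choice of $\ell_0$, the bookkeeping of the nebentypus (its values, and its compatibility with the parity of the weight), the degeneracy maps, and the final reduction-mod-$\mathfrak p$ argument are all routine.
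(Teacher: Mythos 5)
For odd $p$ your argument is essentially the paper's: reduce via $(H_p)$ to producing, for each $N$ with a suitable divisor $d$, one non-split Hecke polynomial; invoke Proposition~\ref{prop:rep}, realise the dihedral representation by an eigenform of level dividing $d$ and quadratic nebentypus (the paper does this via Hecke's weight-one theta series together with the remark that the weight mod~$p$ can be adjusted into $[2,p^2-1]$, rather than quoting dihedral Serre-type modularity in the prescribed weight, but this is packaging), push it to level $N$ by degeneracy maps, and detect a trace outside $\F_p$ at a Chebotarev prime $\ell_0$.

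There is, however, a genuine gap at $p=2$. In Theorem~\ref{main-theorem} the characters are required to take values in $\{\pm1\}\subseteq\F_p^\times$, and for $p=2$ this subgroup is trivial, so the only admissible character is the trivial one; the exceptional set for $p=2$ is defined by the splitting of the polynomials $T^{N,1}_{\ell,k}$ alone. Your construction, on the other hand, ends with nebentypus $\chi'$ equal to the prime-to-$p$ part of $\chi_K$; since $d\equiv 3\bmod 4$ is odd, for $p=2$ this is $\chi_K$ itself, a nontrivial quadratic character, and ``taking $\chi=\chi'$'' therefore exhibits non-splitting only for a character that is not in the family quantified in the theorem. One still needs to transfer the mod-$2$ eigenvalue system to the trivial-character space at the same level, which is exactly the paper's final step: there is a congruence modulo a prime above $2$ between $f$ and a form of the same level with trivial Dirichlet character (possible because $\chi_K$ reduces to the trivial character mod~$2$; e.g.\ by a Carayol-type lemma on changing the nebentypus within its mod-$p$ class). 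Without this step the case $p=2$ of the proposition is not proved.
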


\begin{proof}
Let $N \in \N$ such that there is a squarefree $d \equiv 3 \mod 4$ with $d \mid N$
for which $\CL(\Q(\sqrt{-d}))$ is $p$-suitable.
It suffices to show that there is a cuspidal Hecke eigenform $f$ modulo~$p$ of level~$N$,
quadratic Dirichlet character and some weight such that it has some coefficient
$a_n(f)$ in its $q$-expansion which does not lie in~$\F_p$.

Let us take such an~$N$ and~$d$. By means of Proposition~\ref{prop:rep} there is an
odd dihedral Galois representation $\rho: \Gal(\overline{\Q}/\Q) \to \GL_2(\overline{\F}_p)$
of conductor $d$ such that not all its traces lie in~$\F_p$.
By work of Hecke, $\rho$ is known to be modular of level~$d$, weight~$1$ for the
quadratic Dirichlet character belonging to $\Q(\sqrt{-d})$. This means that
there is a holomorphic Hecke eigenform~$f$ in the specified level, weight and character
whose coefficients of the standard $q$-expansion $a_\ell(f)$ at primes~$\ell$
reduce modulo a suitable prime lying above~$p$ to the trace of a Frobenius element
at~$\ell$. In particular, there is a prime~$\ell$ such that the Hecke polynomial at~$\ell$
is not completely split modulo~$p$. Noting that via the degeneracy maps $f$ gives
rise to a form in level~$N$ settles the claim for odd~$p$.

In order to treat the case $p=2$, we use the well-known fact that there is a congruence
modulo a prime above~$2$ of~$f$
and another modular form of the same level and the trivial Dirichlet character.
\end{proof}

\section{Exponents of class groups and proof of~$(H_2)$}\label{sec:htwo}

In this section, we will show Hypothesis $(H_2)$. Let us call a positive integer $d \equiv 3 \mod 4$
which is squarefree \emph{$2$-suitable} if $\CL(\Q(\sqrt{-d}))$ is $2$-suitable.
In order to show that the set of positive integers $N$ having a $2$-suitable squarefree positive divisor
$d \equiv 3 \mod 4$ has density~$1$, we first need to rule out the possibility that the only cyclic quotients are of order~$2^2-1=3$. To do this, we recall the following result on class groups with small exponent.

\begin{theorem}[Boyd-Kisilevksy~\cite{boyd-kisilevsky}, Weinberger~\cite{weinberger}]
There are only finitely many negative fundamental discriminants~$d$ such that $\CL (\Q(\sqrt{d}))$ has exponent~3.
\end{theorem}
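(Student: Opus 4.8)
The plan is to argue by contradiction. Since there are only finitely many fields in question precisely when $|d|$ stays bounded, it suffices to show that $\CL(\Q(\sqrt{d}))$ cannot have exponent~$3$ once $|d|$ is large. The first step is to pin down the shape of such a~$d$ by genus theory: if the class group has exponent~$3$ then it has no element of order~$2$, so its $2$-rank is~$0$, and hence (genus theory again) $d$ has exactly one prime divisor. Discarding $\Q(i)$ and $\Q(\sqrt{-2})$ (class number~$1$), this forces $d=-p$ for a prime $p\equiv 3\bmod 4$ with $p>3$. Write $K=\Q(\sqrt{-p})$, so $\mathcal{O}_K=\Z[\tfrac{1+\sqrt{-p}}{2}]$, and let $h=h(K)$.

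Next I would show that no small prime splits in~$K$. Let $q$ be a rational prime with $(q)=\mathfrak{q}\overline{\mathfrak{q}}$. Because the class group has exponent~$3$, the ideal $\mathfrak{q}^{3}$ is principal, say $\mathfrak{q}^{3}=(\alpha)$ with $\alpha=\tfrac{u+v\sqrt{-p}}{2}$ and $u\equiv v\bmod 2$. Taking norms gives $u^{2}+pv^{2}=4q^{3}$. If $v=0$ then $q^{3}$ would be a perfect square, which is false; hence $v\neq 0$ and $p\leq pv^{2}\leq 4q^{3}$. Thus every split prime is at least $X:=(p/4)^{1/3}$, and since the only ramified prime, namely~$p$, also exceeds~$X$, every prime ideal of $\mathcal{O}_K$ of norm at most~$X$ has the form $(q)$ for an inert prime $q\leq\sqrt{X}$. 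Consequently every integral ideal of norm $\leq X$ is a principal ideal $(m)$ with $m\in\Z$, $1\leq m\leq\sqrt{X}$, so $\#\{\mathfrak{a}\subseteq\mathcal{O}_K : N\mathfrak{a}\leq X\}\leq\sqrt{X}$.

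On the other hand, the classical asymptotic for the ideal-counting function of~$K$ reads $\#\{\mathfrak{a}: N\mathfrak{a}\leq X\}=\tfrac{\pi h}{\sqrt{p}}\,X+O(\sqrt{X})$. Comparing this with the bound just obtained gives $h\ll p^{1/3}$. This contradicts the lower bound $h\gg_{\varepsilon}p^{1/2-\varepsilon}$ coming from Siegel's theorem (equivalently, from the class number formula together with Siegel's bound $L(1,\chi)\gg_{\varepsilon}p^{-\varepsilon}$ for the quadratic character~$\chi$ attached to~$K$, or from the Brauer--Siegel theorem) once $p$ is large enough, and hence only finitely many~$d$ can occur.

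The step I expect to be the real obstacle is the last one. The argument only ever produces a contradiction through a lower bound for~$h$ (equivalently for $L(1,\chi)$), and it needs that bound exactly in the regime where the class group is small, which is precisely where an exceptional (Siegel) zero cannot be excluded unconditionally and effectively. With the known effective bounds of Goldfeld--Gross--Zagier type, which give only $h\gg\log p$, the inequality $h\ll p^{1/3}$ is no contradiction at all, so the statement as given is inherently ineffective. Under GRH one can instead finish effectively in a different way: GRH guarantees a split prime $q_{0}\ll(\log p)^{2}$, and then the inequality $p\leq 4q_{0}^{3}\ll(\log p)^{6}$ from the second step is already absurd for large~$p$. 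I would also note that there is slack in the exponents here (any effective bound $h\gg p^{1/3+\delta}$ would do), so no sharpening of the Diophantine input in the second step is needed.
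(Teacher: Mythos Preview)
The paper does not prove this theorem at all: it is quoted as a known result of Boyd--Kisilevsky and Weinberger, and the only comment the paper adds is that the result ``is ineffective because it relies on Siegel's ineffective lower bound for the size of the class group.'' So there is no in-paper proof to compare against; what one can compare is your outline against the strategy of the cited papers, and there your overall plan (genus theory to reduce to $d=-p$ with $p\equiv 3\bmod 4$; the Diophantine bound forcing every split prime to be at least $(p/4)^{1/3}$; an upper bound on~$h$; and finally a contradiction with Siegel's $h\gg_\varepsilon p^{1/2-\varepsilon}$) is exactly the shape of their argument, and your discussion of the ineffectivity and of the GRH alternative is on target.

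There is, however, a genuine gap in the step where you extract $h\ll p^{1/3}$. The asymptotic
\[
\#\{\mathfrak{a}\subseteq\mathcal{O}_K:\ N\mathfrak{a}\le X\}=\tfrac{\pi h}{\sqrt{p}}\,X+O(\sqrt{X})
\]
is classical for a \emph{fixed} field~$K$, but the implied constant depends on~$K$; indeed the Gauss-circle count in each ideal class gives an $O(\sqrt{X})$ error with an absolute constant, so after summing over the $h$ classes the honest uniform error is $O(h\sqrt{X})$, and via the hyperbola method with P\'olya--Vinogradov one gets $O(\sqrt{pX}\log p)$. Either way, at $X=(p/4)^{1/3}$ the main term $\tfrac{\pi h}{\sqrt p}X\asymp h\,p^{-1/6}$ is swamped by the error, and no bound on~$h$ drops out. (Equivalently: your identity $\sum_{n\le X}r(n)=\lfloor\sqrt{X}\rfloor$ for $X<(p/4)^{1/3}$ is just the tautology $\sum_{d\mid n}\lambda(d)=[n\text{ is a square}]$, since $\chi$ coincides with the Liouville function on $[1,X]$.) Boyd--Kisilevsky do obtain an upper bound of the form $h\ll d^{1/2-1/(2n)}(\log d)^{O(1)}$ when the exponent divides~$n$, but getting there requires genuinely more input than the unweighted ideal count --- one has to control $L(1,\chi)$ (or, equivalently, the least split prime) uniformly in~$d$, and this is where the analytic work, and ultimately the ineffective appeal to Siegel, actually lives. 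Your sketch locates the contradiction in the right place but skips the hard estimate.
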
 
We note that this result is ineffective because it relies on Siegel's ineffective lower bound for the size of the class group. A computation is reported in~\cite{schuett} which says that the largest fundamental discriminant with absolute value less than~$10^6$ such that the exponent is~3 is~$-4027$; $\CL (\Q(\sqrt{-4027}))$ is isomorphic to~$C_3 \times C_3$; it is possible that this is the largest such fundamental discriminant.

We now note by genus theory that if~$d\equiv 3 \mod 4$ is a prime number, then the 2-part of the class group of~$\Q(\sqrt{-d})$ is trivial, so this means that for all but finitely many of these~$d$ both of the conditions of 2-suitability are satisfied. We will now use a well-known theorem of Landau to show that the set of natural numbers~$N$ which are divisible by such a~$d$ has density~1.

\begin{theorem}[Landau~\cite{landau}, pp 668--669]
Let $\{a_i\}$ be $r$ distinct residue classes modulo an integer~$A$ and let~$\mathcal{P}$ be the set of prime numbers which are congruent to one of the~$a_i$ modulo~$A$. If we let~$M(x)$ be the number of natural numbers less than~$x$ whose prime factors are all in $\mathcal{P}$, then
\[
M(x)\sim c \cdot \frac{x}{(\log x)^{1-\frac{r}{\phi(A)}}},
\]
where $c$ is a positive constant and $\phi$ is Euler's $\phi$-function.
Note that $\frac{r}{\phi(A)}$ is the Dirichlet density of the set~$\mathcal{P}$.
\end{theorem}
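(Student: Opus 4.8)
The plan is to treat this as a statement in analytic number theory about the summatory function of the multiplicative indicator $f$ of the set $\{n \ge 1 : p \mid n \Rightarrow p \in \mathcal{P}\}$, and to read off the asymptotics of $M(x) = \sum_{n \le x} f(n)$ from the behaviour at $s = 1$ of the associated Dirichlet series, via the Selberg--Delange method.

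First I would introduce
\[
F(s) = \sum_{n \ge 1} \frac{f(n)}{n^s} = \prod_{p \in \mathcal{P}} \Bigl( 1 - \frac{1}{p^s} \Bigr)^{-1},
\]
which converges absolutely for $\Re s > 1$. Taking logarithms gives $\log F(s) = \sum_{p \in \mathcal{P}} p^{-s} + G_1(s)$ with $G_1$ holomorphic for $\Re s > \tfrac12$. To analyse $\sum_{p \in \mathcal{P}} p^{-s}$ I would expand the indicator of $\mathcal{P}$ over the Dirichlet characters modulo $A$: with $\delta = r/\phi(A)$ one has $\mathbf 1_{\mathcal{P}}(p) = \delta + \sum_{\chi \ne \chi_0} c_\chi \chi(p)$ for suitable constants $c_\chi$ (up to the finitely many primes dividing $A$), hence
\[
\sum_{p \in \mathcal{P}} p^{-s} = \delta \sum_p p^{-s} + \sum_{\chi \ne \chi_0} c_\chi \sum_p \chi(p) p^{-s} = \delta \log \frac{1}{s-1} + G_2(s),
\]
where $G_2$ is holomorphic near $s = 1$. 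Here one uses $\sum_p p^{-s} = \log \zeta(s) + (\text{holomorphic}) = \log\tfrac{1}{s-1} + (\text{holomorphic near } s = 1)$ and, for $\chi \ne \chi_0$, $\sum_p \chi(p) p^{-s} = \log L(s,\chi) + (\text{holomorphic})$, which is holomorphic at $s = 1$ since $L(1,\chi) \ne 0$ by Dirichlet's theorem. Thus $F(s) = (s-1)^{-\delta} H(s)$ with $H$ holomorphic and non-vanishing near $s = 1$; invoking the classical zero-free region for Dirichlet $L$-functions (a possible Siegel zero only perturbing the error term) and the non-vanishing of $L(s,\chi)$ on $\Re s = 1$, this factorization persists on a Delange-type domain $\{\Re s \ge 1 - c_0/\log(|\Im s| + 2)\}$ away from a branch cut emanating from $s = 1$, which is exactly the analytic input the Tauberian step requires.

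Next I would apply the Selberg--Delange method --- equivalently, a Hankel-contour evaluation of Perron's formula, deforming the path of integration around the branch point $s = 1$. From the factorization one obtains
\[
M(x) \sim \frac{H(1)}{\Gamma(\delta)} \cdot \frac{x}{(\log x)^{1-\delta}},
\]
the asserted asymptotic with $c = H(1)/\Gamma(\delta)$. Positivity of $c$ is immediate: near $s = 1$ the function $\log F(s) - \delta \log\tfrac{1}{s-1} = g(s)$ is holomorphic, so $H(s) = e^{g(s)}$ never vanishes and $H(1) = e^{g(1)} > 0$. The closing remark that $\delta = r/\phi(A)$ is the Dirichlet density of $\mathcal{P}$ drops out of the computation of $\sum_{p \in \mathcal{P}} p^{-s}$ above, or from the prime number theorem for arithmetic progressions.

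The main obstacle is purely analytic: justifying the continuation of $H(s)$ across $\Re s = 1$ and controlling the error terms in the contour shift. This requires the standard zero-free region for Dirichlet $L$-functions together with the associated growth and derivative estimates, and is the technical heart of Landau's original argument; the character and Euler-product manipulations above are routine by comparison. Alternatively one may simply quote Wirsing's theorem on mean values of non-negative multiplicative functions, or Tenenbaum's packaging of the Selberg--Delange method, which encapsulates exactly this Tauberian input and lets one bypass the contour analysis.
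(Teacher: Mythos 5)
This statement is quoted in the paper as a known theorem of Landau (with a page reference to the Handbuch) and is not proved there, so there is no internal proof to compare against; your sketch reconstructs the standard argument, which is essentially Landau's original one in its modern Selberg--Delange packaging (Euler product, character decomposition of $\mathbf 1_{\mathcal{P}}$, $L(1,\chi)\neq 0$, factorization $F(s)=(s-1)^{-r/\phi(A)}H(s)$, Hankel-contour Tauberian step), and it is sound, with the technical continuation and contour estimates correctly identified as the only remaining work. As you note, one can also bypass the contour analysis entirely by citing Delange's Tauberian theorem or Wirsing's theorem on non-negative multiplicative functions, for which no zero-free region is needed.
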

In particular, this means that the set of natural numbers whose prime factors are all congruent to a restricted set of the possible residue classes for a prime number modulo~$A$ has natural density~0, so the set of natural numbers with a prime factor which is a 2-suitable~$d$ has density~1, which is what we wanted to show. This means that we have proved the following proposition:
\begin{proposition}
The hypothesis~$(H_2)$ is true, and therefore the conclusion of Theorem~\ref{main-theorem} is true if~$p=2$.
\end{proposition}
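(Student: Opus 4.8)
The plan is to combine the three results recalled in this section: the Boyd--Kisilevsky--Weinberger finiteness theorem, genus theory, and Landau's theorem.

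First I would record what $2$-suitability means concretely. Since $p=2$ gives $p^2-1=3$, the class group $\CL(\Q(\sqrt{-d}))$ is $2$-suitable exactly when it admits a cyclic quotient of order~$h$ with $h$ odd and $h \notin \{1,3\}$. Now let $d$ be a prime with $d \equiv 3 \bmod 4$. Then $-d$ is a fundamental discriminant divisible by a single prime, so genus theory gives a unique genus; equivalently $C := \CL(\Q(\sqrt{-d}))$ satisfies $C = C^2$ and hence has odd order. In particular its exponent $h$ is odd, and since a finite abelian group surjects onto the cyclic group of order equal to its exponent (its largest invariant factor), $C$ has a cyclic quotient of order~$h$. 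The set of negative fundamental discriminants $-d$ for which $h \in \{1,3\}$ is finite: those with exponent (equivalently class number)~$1$ by the classical determination of imaginary quadratic fields of class number one, and those with exponent~$3$ by the Boyd--Kisilevsky--Weinberger theorem. Hence for all but finitely many primes $d \equiv 3 \bmod 4$ we have $h \ge 5$, so $h \notin \{1,3\}$ and such a $d$ is $2$-suitable in the sense of Section~\ref{sec:htwo}.

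It then remains to show that the set of $N \in \N$ divisible by some $2$-suitable prime $\equiv 3 \bmod 4$ has density~$1$, for which I would invoke Landau's theorem. Let $\mathcal{Q}$ be the finite set of primes $\equiv 3 \bmod 4$ that are not $2$-suitable. If $N$ has no $2$-suitable prime divisor $\equiv 3 \bmod 4$, then every prime factor of $N$ lies in $\{2\} \cup \{\ell : \ell \equiv 1 \bmod 4\} \cup \mathcal{Q}$. Landau's theorem with $A=4$ and the single residue class $a_1 = 1$ (so that $r/\phi(A) = 1/2$) shows that the number of integers up to~$x$ all of whose prime factors are $\equiv 1 \bmod 4$ is $\sim c\,x/(\log x)^{1/2} = o(x)$; adjoining the prime~$2$ and the finitely many primes of $\mathcal{Q}$ multiplies this count by at most a fixed power of $\log x$, so the set of such~$N$ still has density~$0$. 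Therefore its complement, the set of~$N$ divisible by a $2$-suitable squarefree $d \equiv 3 \bmod 4$ (one may take $d$ prime), has density~$1$, which is precisely Hypothesis~$(H_2)$. Combined with the Proposition asserting that $(H_p)$ implies Theorem~\ref{main-theorem}, this yields the conclusion of Theorem~\ref{main-theorem} for $p=2$.

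The only genuinely nontrivial input is the Boyd--Kisilevsky--Weinberger finiteness theorem, which is already available and which I would cite as a black box (at the cost of ineffectivity, as noted above); everything else is elementary. The one point demanding a little care is the last step, where one must check that enlarging the allowed prime set by the single prime~$2$ and by the finitely many primes of $\mathcal{Q}$ does not destroy the $o(x)$ estimate --- but each such extra prime~$q$ inflates the counting function by at most a factor $O(\log x)$, so this is routine.
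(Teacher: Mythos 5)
Your overall route is the same as the paper's: restrict to prime $d \equiv 3 \bmod 4$, use genus theory to see the class group has odd order, use the Boyd--Kisilevsky--Weinberger theorem (together with the finiteness of the class-number-one case) to exclude exponents $1$ and $3$, and then use Landau's theorem to show that the integers with no $2$-suitable prime divisor $\equiv 3 \bmod 4$ form a density-zero set. Your treatment of the class-group side is in fact slightly more explicit than the paper's (you separate the exponent-$1$ case and the passage from the exponent to a cyclic quotient), and that part is fine.

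The gap is in the final counting step. You bound the number of $N \le x$ all of whose prime factors lie in $\{2\} \cup \{\ell \equiv 1 \bmod 4\} \cup \mathcal{Q}$ by taking Landau's estimate $M(x) \sim c\,x/(\log x)^{1/2}$ for the primes $\equiv 1 \bmod 4$ and asserting that adjoining each extra prime inflates the count by at most a factor $O(\log x)$. That inflation bound is true (adding one prime $q$ gives $\sum_{j\colon q^j \le x} M(x/q^j) \le (1+\log_q x)\,M(x)$), but it is useless here: already one extra prime turns your upper bound into $O\bigl(x(\log x)^{1/2}\bigr)$, which exceeds $x$, so density~$0$ does not follow; with $\#\mathcal{Q}+1$ extra primes the bound is $x(\log x)^{\#\mathcal{Q}+1/2}$, even worse. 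What you need is that the count stays $O\bigl(x/(\log x)^{1/2}\bigr)$, i.e.\ that adjoining finitely many primes does not change the exponent of $\log x$, because the Dirichlet density of the allowed prime set is still $1/2$. One repair: write each such $N$ uniquely as $N = mn$ with $m$ composed of primes in the finite set $T = \{2\} \cup \mathcal{Q}$ and $n$ composed of primes $\equiv 1 \bmod 4$; the count is then $\sum_{m \le x,\ m\ T\text{-smooth}} M(x/m)$, and splitting at $m \le \sqrt{x}$ (using $\sum_{m\ T\text{-smooth}} 1/m < \infty$ in the main range, and the bound $O(\sqrt{x}\,(\log x)^{\#T})$ for the tail) gives $O\bigl(x/(\log x)^{1/2}\bigr) = o(x)$. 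Alternatively, invoke a form of Landau's theorem that applies directly to a prime set of Dirichlet density $1/2$, which is essentially how the paper argues. With that step repaired, your argument does prove $(H_2)$ and hence the proposition.
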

The technique used to prove this hypothesis is likely to only give a density~0 result; we will now give some numerical data which suggests this.

Using a computer algebra package such as \Magma{}~\cite{magma} one finds that there are many quadratic 
imaginary fields whose class groups have trivial odd part; for instance, if we consider quadratic imaginary fields with fundamental discriminant of absolute value less than~$4,000,000$ there are~3722 
fields with class group of order~128, 8361 fields with class group of 
order~256, and~18046 fields with class group of order~512. This numerical evidence seems to suggest that there are an infinite number of imaginary quadratic fields with class number a power of~2, as one can find fields with class number a very high power of~2; for instance, it can be shown that~$\Q(\sqrt{-5000948753})$ has class number~65536. For more numerical results, see Section~10 of~\cite{rosen-silverman} which gives tables of the number of quadratic imaginary fields with small odd part with fundamental discriminant between~$-500000$ and~$-1000000$.

\section{Cohen-Lenstra heuristics and Hypothesis~$(H_p)$}\label{sec:hp}

In this section we want to make use of the Cohen-Lenstra heuristics
\cite{cohen-lenstra} for class groups of imaginary quadratic fields.
We first recall their principal definitions and their fundamental
heuristic assumption. We will, however, specialize them directly to
imaginary quadratic fields. We abbreviate the words {\em
fundamental discriminant} by {\em f.d.}

\begin{definition}[Cohen-Lenstra~\cite{cohen-lenstra},
Definition~5.1]\label{def:cl}
\begin{enumerate}[(a)]
\item Let $G$ be an abelian group. Define
$$w(G) = \left(\#\Aut(G)\right)^{-1}.$$
This will play the role of a weighting factor in the heuristics.

\item Let $\underline{A}$ be a set of isomorphism classes of abelian
groups and let $f$ be a complex-valued function on~$\underline{A}$.
The $(\infty,0,\underline{A})$-average of~$f$ is defined as
$$  M_{(\infty,0,\underline{A})}(f) := \lim_{x \to \infty}
\frac{\sum_{1 \le a \le x} \sum_{G \in \underline{A}, \#G=a} f(G) w(G)}
   {\sum_{1 \le a \le x} \sum_{G \in \underline{A}, \#G=a} w(G)},$$
if this limit exists.
\end{enumerate}
\end{definition}

We introduce some notation. For sake of shortness, we write $\CL(-d)$ for
$\CL(\Q(\sqrt{-d}))$. Let $S$ be a set of primes. We denote the
prime-to-$S$-part
of an abelian group $C$ by $\ppp{S}(C)$ and the $S$-part by $\pp{S}(C)$.
Moreover, if $S = \{p\}$, then we write $\ppp{p}(C)$ and $\pp{p}(C)$,
respectively.
In fact, we consider $\ppp{S}$ and $\pp{S}$ as functions on
(isomorphism classes of) finite abelian groups.
We write $\Ab{S}$ for the set of isomorphism classes of finite abelian groups of order
divisible only by primes in~$S$. Accordingly, we use the notation $\Abp{S}$ to
stand for the set of isomorphism classes of finite abelian groups of order
coprime with any prime in~$S$.

\begin{fha}[Cohen-Lenstra~\cite{cohen-lenstra}, Fundamental
Assumptions~8.1]\label{fha:cl}
Let $f$ be a function on~$\Abp{2}$ taking values in $\{0,1\}$.
Let
$$\cF := \{ d \in \N \;|\; -d \textrm{ is f.d.}\}$$
and define
$$\cF_f := \{ d \in \cF \;|\; f(\ppp{2}(\CL(-d))) = 1 \}.$$
Then the natural density $\natden{\cF_f}{\cF}$ of $\cF_f$ in~$\cF$ exists and
is equal to
$M_{(\infty,0,\Abp{2})}(f)$.
\end{fha}

\begin{proposition}\label{prop:cl}
Let $S$ be a set of primes containing the prime~$2$ which has a
density strictly smaller than~$1$. Assume the Cohen-Lenstra
heuristics, i.e.\ assume that Fundamental Heuristic
Assumption~\ref{fha:cl} is satisfied. Then the set
$$\cF_S := \{ d \in \cF \;|\; \# \ppp{S}(\CL(-d)) = 1 \}$$
has natural density~$0$ in the set~$\cF$.
\end{proposition}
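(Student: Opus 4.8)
The plan is to express the event "$\#\ppp{S}(\CL(-d)) = 1$" as the vanishing of a $\{0,1\}$-valued function on $\Abp{2}$ and then apply the Fundamental Heuristic Assumption to reduce the statement to a computation of a Cohen--Lenstra average, which should turn out to be $0$. Write $S' = S \setminus \{2\}$, so that $S'$ is a set of odd primes and, since $2$ has density $0$ among primes, $S'$ still has density strictly smaller than $1$. For a finite abelian group $C$ of odd order, $\#\ppp{S}(C) = 1$ if and only if $C \in \Ab{S'}$, i.e.\ the order of $C$ is divisible only by primes in $S'$. So I would define $f\colon \Abp{2} \to \{0,1\}$ by $f(C) = 1$ precisely when $C \in \Ab{S'}$; then $\ppp{2}(\CL(-d)) = C$ satisfies $f(C) = 1$ exactly when $\ppp{S}(\CL(-d))$ is trivial. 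Thus $\cF_S = \cF_f$ in the notation of the FHA, and it remains to show $M_{(\infty,0,\Abp{2})}(f) = 0$.

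To compute that average, I would use the standard Cohen--Lenstra factorization of the weighted sum over finite abelian groups into an Euler product over primes. The numerator $\sum_{G \in \Abp{2},\ f(G)=1} w(G) \,[\#G \le x]$ is, up to the usual asymptotics, governed by the Euler factors at primes $q \in S'$ only, while the denominator involves the Euler factors at all odd primes $q$. The ratio is therefore controlled by $\prod_{q \ \mathrm{odd},\ q \notin S'} (\text{local factor})^{-1}$, and each such local factor at a prime $q$ is $\sum_{k \ge 0} \frac{1}{\#\Aut((\Z/q)^{?})}\cdots$, i.e.\ a quantity of the form $\prod_{i \ge 1}(1 - q^{-i})^{-1} > 1$; in particular it is bounded below by $1 + q^{-1}$. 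Since the set of odd primes not in $S'$ has positive density (this is where the hypothesis "density of $S$ strictly smaller than $1$" enters), $\sum_{q \notin S'} q^{-1}$ diverges, so $\prod_{q \notin S'}(1 + q^{-1})^{-1} = 0$, and hence the average is $0$.

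The one genuine subtlety is that the FHA as stated gives a density only for the prime-to-$2$ part of the class group, whereas $\cF_S$ involves $\ppp{S}(\CL(-d))$ with $2 \in S$; but since $2 \in S$, removing the $2$-part loses nothing, and $\ppp{S}(\CL(-d)) = \ppp{S'}(\ppp{2}(\CL(-d)))$, so the reformulation in terms of a function on $\Abp{2}$ is exactly right. I would therefore carry out the steps in this order: (1) reduce to $S'$ and define $f$; (2) identify $\cF_S$ with $\cF_f$ and invoke the FHA to get $\natden{\cF_S}{\cF} = M_{(\infty,0,\Abp{2})}(f)$; (3) evaluate this average via the Euler-product form of the Cohen--Lenstra sums; (4) observe that the complementary product diverges to $0$ because $\sum_{q \notin S'} 1/q = \infty$. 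The main obstacle is step (3): one must be careful that the Euler-product evaluation of the $(\infty,0,\Abp{2})$-average is legitimate for the indicator function $f$ of $\Ab{S'}$ — i.e.\ that the sums defining numerator and denominator really do factor as claimed and that the limit exists — rather than merely formal. One clean way around this is to avoid computing the average exactly: for any finite set $T$ of odd primes disjoint from $S'$, the function $g_T$ which is the indicator of "$q \nmid \#G$ for all $q \in T$" dominates $f$, and $M(g_T)$ is an honest finite product $\prod_{q \in T}\prod_{i\ge 1}(1 - q^{-i})$ by the known Cohen--Lenstra values; choosing $T$ with $\sum_{q \in T} 1/q$ large makes $M(g_T)$ arbitrarily small, forcing $M(f) = 0$ by monotonicity of the average. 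This sidesteps any convergence worry about infinite products and reduces everything to the single analytic input that the odd primes outside $S'$ have divergent reciprocal sum.
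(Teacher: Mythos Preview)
Your proposal is correct and follows the same overall strategy as the paper: define the indicator function of ``prime-to-$S$ part is trivial'' on $\Abp{2}$, identify $\cF_S$ with $\cF_f$, apply the Fundamental Heuristic Assumption, and then show the Cohen--Lenstra average is~$0$ using the divergence of $\sum_{q\notin S} 1/q$.

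The one place where your execution differs from the paper's is in how the average is shown to vanish. The paper does not stay inside $\Abp{2}$ and attack the Euler product directly; instead it invokes \cite{cohen-lenstra}, Proposition~5.6, to rewrite
\[
M_{(\infty,0,\Abp{2})}(f\circ\ppp{S}) \;=\; M_{(\infty,0,\Abp{S})}(f),
\]
where now $f$ is the indicator of the trivial group. In the universe $\Abp{S}$ this average is literally $\bigl(\sum_{G\in\Abp{S}} w(G)\bigr)^{-1}$, and bounding the sum below by the contribution of cyclic groups of prime order $q\notin S$ gives $\sum_{q\notin S} 1/(q-1) = \infty$. Your domination argument via $g_T$ (indicator of ``$q\nmid \#G$ for all $q\in T$'') is a valid alternative: since $f\le g_T$ termwise, the prelimit ratios satisfy the same inequality, so $\limsup \le M(g_T)=\prod_{q\in T}\prod_{i\ge 1}(1-q^{-i})$, which tends to~$0$ as $T$ exhausts the primes outside~$S$; hence the limit exists and equals~$0$. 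This sidesteps any convergence issue you flagged in step~(3), at the cost of quoting the known values $M(g_T)$ --- which themselves ultimately rest on the same Proposition~5.6. So the two routes are close cousins; the paper's is a line shorter, yours is slightly more self-contained.
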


\begin{proof}
Let $f$ be the function which sends the trivial abelian group to~$1$ and the
isomorphism class of any nontrivial abelian group to~$0$.
The principal input for this proof is \cite{cohen-lenstra},
Proposition~5.6, which implies that
$$ M_{\left(\infty,0,\Abp{2}\right)} (f \circ \ppp{S}) =
M_{\left(\infty,0,\Abp{S}\right)}
(f).$$
It follows directly from Definition~\ref{def:cl} that the right hand side term
is equal to
$$\lim_{x \to \infty}
\left(\sum_{a=1}^x \underset{\#G=a}{\sum_{G \in \Abp{S}}} w(G)\right)^{-1}.$$
This limit is~$0$ because the sum is larger than
$$ \underset{p \not\in S}{\sum_{p \textrm{ prime}}} \frac{1}{p-1},$$
which is divergent.
Under the Fundamental Heuristic Assumption~\ref{fha:cl},
the meaning of
$$M_{(\infty,0,\Abp{2})} (f \circ \ppp{S})$$
is the natural density of $\cF_S$ in~$\cF$.
\end{proof}

For a subset $A \subseteq \N$ we introduce the following shorthand notation:
$$ A(x) := \# \{ a \in A \;|\; a < x\}.$$
We first prove a simple lemma on the natural density~$\Delta(A,B)$.

\begin{lemma}\label{lem:natden}

\begin{enumerate}[(a)]
\item\label{it:a} Let $A \subseteq B \subseteq C$ be subsets of~$\N$
such that $\natden AB = \natden BC = 1$.
Then $\natden AC = 1$.

\item\label{it:b} Let $A \subseteq B \subseteq C$ be subsets of~$\N$
such that $\natden AC > 0$. Then $\natden AC = \natden BC > 0$
if and only if $\natden AB = 1$.

\item\label{it:c} Let $n$ be any positive integer and let $A \subseteq B$
 be subsets of~$\N$ such that $\natden AB = 1$.
Denote by $nA = \{ na \;|\; a \in A\}$ and similarly for $nB$.
Then $\natden{nA}{nB} = 1$.

\item\label{it:d} Let $A \subseteq B$ and $C \subseteq B$
be subsets such that $\natden AB = 1$ and $\natden{C}{B} = \alpha > 0$.
Then $\natden{A\cap C}{C} = 1$.

\item\label{it:e} Let $A_n \subseteq A_{n+1}$ for all~$n \in \N$ be subsets
of a set $B \subseteq \N$ all having a natural density $\natden{A_n}{B}$.
Assume that $\lim_{n\to \infty} \natden{A_n}{B} = 1$.
Let $A = \bigcup_{n\in \N} A_n$.
Then $\natden AB = 1$.

\item\label{it:f} Let $A_1 \subseteq B_1$ and $A_2 \subseteq B_2$ be subsets
of~$\N$
such that $\natden {A_i}{B_i} = 1$ for $i=1,2$.
Then $\natden{A_1 \cup A_2}{B_1 \cup B_2} = 1$.
\end{enumerate}
\end{lemma}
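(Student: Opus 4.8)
The plan is to prove each of the six parts by direct manipulation of the counting functions $A(x)$, reducing everything to elementary statements about limits of ratios. The unifying idea is that $\natden{A}{B}=1$ is equivalent to saying $(B(x)-A(x))/B(x) \to 0$, i.e.\ the ``complement'' $B\setminus A$ is negligible inside $B$; many of the parts are then just bookkeeping with such negligible sets.

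\medskip

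For part~(\ref{it:a}), I would write $C(x)-A(x) = (C(x)-B(x)) + (B(x)-A(x))$ and divide by $C(x)$; the first term over $C(x)$ tends to $0$ by hypothesis, and the second equals $\frac{B(x)-A(x)}{B(x)}\cdot\frac{B(x)}{C(x)}$, where the first factor tends to $0$ and the second is bounded by $1$, so the whole thing vanishes. For part~(\ref{it:b}), the hypothesis $\natden{A}{C}>0$ forces $\natden{B}{C}>0$ (since $A\subseteq B$), so $B(x)/C(x)$ is eventually bounded away from $0$; then $\frac{A(x)}{B(x)} = \frac{A(x)/C(x)}{B(x)/C(x)}$ converges to $\natden{A}{C}/\natden{B}{C}$, and this equals $1$ iff the two densities coincide. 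Part~(\ref{it:c}) is essentially trivial: the map $a\mapsto na$ is an order-preserving bijection $A\to nA$ and $B\to nB$, and $nA(x) = A(x/n)$, $nB(x)=B(x/n)$, so $\natden{nA}{nB} = \lim A(x/n)/B(x/n) = \natden{A}{B} = 1$.

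\medskip

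For part~(\ref{it:d}), note $C\setminus(A\cap C) \subseteq B\setminus A$, so $C(x) - (A\cap C)(x) \le B(x)-A(x)$; dividing by $C(x)$ gives $1 - \frac{(A\cap C)(x)}{C(x)} \le \frac{B(x)-A(x)}{B(x)}\cdot\frac{B(x)}{C(x)}$, and since $C(x)/B(x)\to\alpha>0$ the factor $B(x)/C(x)$ is eventually bounded, while $(B(x)-A(x))/B(x)\to 0$; hence $\natden{A\cap C}{C}=1$. For part~(\ref{it:e}), given $\varepsilon>0$ pick $n$ with $\natden{A_n}{B} > 1-\varepsilon$; since $A_n\subseteq A$ we get $\liminf_{x} A(x)/B(x) \ge \natden{A_n}{B} > 1-\varepsilon$, and $\varepsilon$ was arbitrary, so $\natden{A}{B}=1$ (the limit existing because $A(x)/B(x)$ is squeezed between $1-\varepsilon$ eventually and $1$). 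Finally, for part~(\ref{it:f}), observe $(B_1\cup B_2)\setminus(A_1\cup A_2) \subseteq (B_1\setminus A_1)\cup(B_2\setminus A_2)$, so the complement count is at most $(B_1(x)-A_1(x)) + (B_2(x)-A_2(x))$; dividing by $(B_1\cup B_2)(x) \ge B_i(x)$ bounds this by $\frac{B_1(x)-A_1(x)}{B_1(x)} + \frac{B_2(x)-A_2(x)}{B_2(x)}$, which tends to $0$.

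\medskip

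I do not expect any serious obstacle here; the only points requiring a little care are the \emph{existence} of the limits (not merely $\liminf$/$\limsup$ bounds), which in parts~(\ref{it:d}), (\ref{it:e}) and~(\ref{it:f}) follows from the squeeze $1-o(1) \le \text{ratio} \le 1$, and in part~(\ref{it:b}) from writing the ratio as a quotient of two convergent sequences with nonzero denominator limit. The mild subtlety in~(\ref{it:b}) is checking that $\natden{A}{C}>0$ really does guarantee $B(x)/C(x)$ stays away from $0$ in the limit, which is immediate from $B(x)\ge A(x)$. Everything else is routine arithmetic of inequalities between counting functions.
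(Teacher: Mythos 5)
Your proposal is correct and follows essentially the same route as the paper: parts (a)--(c) are dispatched as routine (the paper simply calls them clear), and for (d), (e), (f) you use exactly the paper's key ingredients — the inequality $C(x) - (A\cap C)(x) \le B(x) - A(x)$ together with $B(x)/C(x)$ being eventually bounded, the squeeze $1-\epsilon \le A_n(x)/B(x) \le A(x)/B(x) \le 1$, and the bound of $B(x)-A(x)$ by $(B_1(x)-A_1(x)) + (B_2(x)-A_2(x))$ — differing only in cosmetic bookkeeping (limit/$o(1)$ language versus explicit $\epsilon$'s, and bounding by $\max(B_1(x),B_2(x))$ instead of $\tfrac12(B_1(x)+B_2(x))$ in (f)).
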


\begin{proof}
(\ref{it:a}), (\ref{it:b}) and (\ref{it:c}) are clear.

(\ref{it:d}) Choose $0 < \delta < \alpha$. There is a bound $D$ such that
$| \frac{C(x)}{B(x)} - \alpha | \le \delta$ for all $x \ge D$. This implies
$(\alpha -  \delta) B(x) \le C(x)$ for all $x \ge D$.
Note the trivial inequality $C(x) - (A\cap C)(x) \le B(x) - A(x)$, which
is valid for all~$x \in \N$.
Let now $\epsilon > 0$ be given. By assumption, there is a bound $E_\epsilon$
such that for all $x \ge E_\epsilon$ we have
$$ B(x) - A(x) \le \epsilon(\alpha-\delta) B(x).$$
Putting the inequalities together we obtain
$$C(x) - (A\cap C)(x) \le \epsilon C(x)$$
for all $x \ge \max( E_\epsilon, D)$ and thus the claim.

(\ref{it:e}) Put $a_{n,x} = \frac{A_n(x)}{B(x)}$. For all~$x$ and all~$n$ we
have
$a_{n+1,x} \ge a_{n,x}$. By assumption, for fixed~$n$ the limit
$\lim_{x \to \infty} a_{n,x} =: a_n$ exists,
we have $a_{n+1} \ge a_n$ for all~$n$ and $\lim_{n \to \infty} a_n = 1$.

Let $\epsilon>0$ be given. There exists a bound~$C$
such that for all $n \ge C$ we have $1 \ge a_n \ge 1 - \frac{1}{2} \epsilon$.
Moreover, there also exists a bound $D$ such that for all~$x \ge D$
we have $| a_{C,x} - a_C | \le \frac{1}{2} \epsilon$.
Hence, for all $x \ge D$ and all $n \ge C$ we have
$$ 1- \epsilon \le a_C-\frac{1}{2}\epsilon \le a_{C,x} \le a_{n,x} \le 1.$$
The claim follows.

(\ref{it:f}) Let $A=A_1 \cup A_2$ and $B=B_1 \cup B_2$.
Note the following inequality which is valid for all~$x \in \N$:
$$ B(x) - A(x) \le B_1(x)-A_1(x) + B_2(x) - A_2(x).$$
Let $\epsilon > 0$ be given. By assumption there exists a bound~$C$ such that
for all $x \ge C$ we have $B_i(x) - A_i(x) \le \frac{1}{2} \epsilon B_i(x)$ for
$i=1,2$.
Moreover, for all $x \in \N$ we have the inequality
$$ B(x) \ge \max ( B_1(x), B_2(x) ) \ge \frac{1}{2} (B_1(x) + B_2(x)).$$
Putting the inequalities together yields
$$  B(x) - A(x) \le \epsilon B(x)$$
for all $x \ge C$ and thus $\natden{A}{B} = 1$.
\end{proof}

The next lemma will be useful for deriving Hypothesis $(H_p)$ from the
Cohen-Lenstra heuristics.

\begin{lemma}\label{lem:one}
\begin{enumerate}[(a)]
\item \label{itd:a} Let $B$ be the set of positive integers that are $\equiv 3 \mod 4$
and let $A$ be the subset of those that are squarefree.
Let $A_N := \bigcup_{n=1}^N (2n-1)^2 A$.
Then $\lim_{N\to \infty} \natden{A_N}{B} = 1$.

\item \label{itd:b} Let $A$ be the set of positive integers that are $\equiv 3 \mod 4$
and $B$ the set of positive integers that are $\equiv 1 \mod 4$.
Let $C_N = B \setminus \bigcup_{i=1}^N p_i A$, where $p_1, p_2,\dots$ are the prime
numbers that are $\equiv 3 \mod 4$.
Then $\lim_{N\to \infty} \natden{C_N}{B} = 0$.

\item \label{itd:c} Let $A$ be the set of positive integers that are not divisible by~$4$.
Let $C_N = \N \setminus \bigcup_{n=0}^N 4^n A$. 
Then $\lim_{N\to \infty} \natden{C_N}{\N} = 0$.
\end{enumerate}
\end{lemma}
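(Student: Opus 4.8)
The plan is to treat the three parts separately, each time reducing to a statement about natural densities of arithmetic progressions and squarefree numbers that can be controlled by standard counting. For part~(\ref{itd:a}), I would start from the fact that a positive integer $m \equiv 3 \mod 4$ can be written uniquely as $m = s^2 d$ with $d$ squarefree; since $m$ is odd, $s$ is odd, say $s = 2n-1$, and $d \equiv 3 \mod 4$ as well (an odd squarefree number that is $1 \bmod 4$ times an odd square stays $1 \bmod 4$, so the congruence class is forced). Hence $B = \bigsqcup_{n \ge 1} (2n-1)^2 A$, and $A_N$ is exactly the union of the first $N$ pieces. The complement $B \setminus A_N$ consists of those $m \equiv 3 \bmod 4$ whose square part $s^2$ has $s \ge 2N+1$; such $m$ up to $x$ number at most $\sum_{s \ge 2N+1} x/s^2 \ll x/N$. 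Dividing by $B(x) \sim x/4$ and letting $x \to \infty$ gives $\natden{B \setminus A_N}{B} \ll 1/N$, hence $\natden{A_N}{B} \to 1$. (One should also check $\natden{A_N}{B}$ exists for each fixed $N$, which follows since each $(2n-1)^2 A$, being a fixed square times a set of squarefree numbers in a fixed residue class, has a natural density inside $B$; this is the kind of routine verification I would not spell out in full.)

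For part~(\ref{itd:b}), the key observation is that multiplication by a fixed prime $p \equiv 3 \bmod 4$ sends numbers $\equiv 3 \bmod 4$ to numbers $\equiv 1 \bmod 4$, so each $p_i A \subseteq B$ and the sets are subsets of the target $B$ (integers $\equiv 1 \bmod 4$). The claim is that the union $\bigcup_{i=1}^N p_i A$ exhausts $B$ in density as $N \to \infty$, i.e. $\natden{C_N}{B} \to 0$. I would bound $C_N(x)$ from above: an element of $B$ that is \emph{not} of the form $p_i \cdot(\text{something} \equiv 3 \bmod 4)$ for any $i \le N$ is in particular not divisible by any of $p_1,\dots,p_N$ except possibly to an even total power structure that keeps it $1 \bmod 4$ — more simply, I would instead lower-bound $(\bigcup_{i=1}^N p_i A)(x)$. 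Write $m = p_i d$ with $d$ squarefree-free-of-the-condition; using inclusion–exclusion on divisibility by $p_1,\dots,p_N$ together with the density of $A$ in its progression, the density of $\bigcup_{i=1}^N p_i A$ in $B$ is at least $1 - \prod_{i=1}^N(1 - c_i)$ for suitable positive $c_i$ with $\sum c_i$ divergent (coming from $\sum 1/p_i = \infty$ over primes $\equiv 3 \bmod 4$). Hence the product tends to $0$, giving the claim. The main technical care here is handling overlaps $p_i A \cap p_j A$ and making the inclusion–exclusion rigorous for densities; this is the step I expect to be the most delicate, because one must ensure the error terms in the counting are uniform enough to survive taking $x \to \infty$ before $N \to \infty$.

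For part~(\ref{itd:c}), I would argue analogously but more simply: every positive integer $m$ has a unique representation $m = 4^n u$ with $4 \nmid u$, so $\N = \bigsqcup_{n \ge 0} 4^n A$ and $C_N$ consists precisely of those $m$ with $4^{N+1} \mid m$. Thus $C_N(x) \le x/4^{N+1}$, so $\natden{C_N}{\N} \le 4^{-(N+1)} \to 0$. Overall, part~(\ref{itd:c}) is immediate, part~(\ref{itd:a}) requires only the tail bound $\sum_{s>S} s^{-2} \ll S^{-1}$, and part~(\ref{itd:b}) — the divergence-of-$\sum 1/p$ argument dressed up as a density statement — is where essentially all the work lies; the cleanest route there is probably to invoke Landau's theorem (or a sieve bound) to control the density of integers $\equiv 1 \bmod 4$ free of all prime factors $\equiv 3 \bmod 4$ up to a bounded exception, rather than doing inclusion–exclusion by hand.
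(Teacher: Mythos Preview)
Your treatment of parts~(\ref{itd:a}) and~(\ref{itd:c}) matches the paper's: the unique decomposition $m=s^2d$ with $d$ squarefree (resp.\ $m=4^nu$ with $4\nmid u$) gives the disjoint union, and then either a tail bound or the explicit density computation finishes. The paper computes $\natden{A_N}{B}=\frac{8}{\pi^2}\sum_{n=1}^N(2n-1)^{-2}\to 1$ directly, but your complement bound is equivalent.

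For part~(\ref{itd:b}) you are making it harder than necessary, and your hesitation (``except possibly to an even total power structure'') reflects a small misconception that you should clear up. The paper's observation is that $C_N$ is \emph{exactly} the set of $m\equiv 1\bmod 4$ with $p_i\nmid m$ for every $i\le N$: if $p_i\mid m$ and $m\equiv 1\bmod 4$, then $m/p_i\equiv 3\bmod 4$ automatically (since $p_i\equiv 3\bmod 4$), so $m\in p_iA$ regardless of the exact power of $p_i$ in $m$. There is no ``even power'' exception. Once you have this characterisation, the Chinese Remainder Theorem gives
\[
\natden{C_N}{B}=\prod_{i=1}^N\Bigl(1-\frac{1}{p_i}\Bigr),
\]
and this tends to $0$ because $\sum_i 1/p_i$ diverges over primes $\equiv 3\bmod 4$. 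No inclusion--exclusion on the overlaps $p_iA\cap p_jA$, and no appeal to Landau's theorem, is required; the argument is a two-line computation once $C_N$ is identified correctly. Your proposed route via inclusion--exclusion would also work, but the uniformity concerns you flag are artefacts of not having spotted this simpler description of $C_N$.
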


\begin{proof}
(\ref{itd:a}) The set $A_N$ is the subset of~$B$ of those integers that are divisible
by some odd square $(2n-1)^2$ for $n \le N$. It is well-known that $\natden AB = \frac{8}{\pi^2}$.
Hence,
$$ \natden{A_N}{B} = \natden{A}{B} \sum_{n=1}^N \frac{1}{(2n-1)^2}
\xrightarrow{N \to \infty} \frac{8}{\pi^2}\frac{\pi^2}{8} = 1,$$
since $n^2 A \cap m^2 A = \emptyset$ for $n \neq m$. From this the claim follows.

(\ref{itd:b}) The set $C_N$ is contained in the set of positive integers~$n$
that are $\equiv 1 \mod 4$ and are not divisible by any of $p_1,p_2,\dots, p_N$.
The latter condition can be reformulated to say that $n$ is a unit in
$\Z/(p_1\cdot p_2 \cdot \ldots \cdot p_N) \Z$. Thus, the density of $C_N$ in~$B$ is
$$ \prod_{i=1}^N \frac{p_i-1}{p_i} = \prod_{i=1}^N \left(1-\frac{1}{p_i}\right) =
\left( \prod_{i=1}^N \left(1-\frac{1}{p_i}\right)^{-1} \right)^{-1} =
\left( \underset{\textrm{cond}_N}{\sum_{n=1}^\infty} \frac{1}{n} \right)^{-1}$$
with the condition $\textrm{cond}_N$ that $n$ is only divisible by the primes $p_1,p_2,\dots, p_N$.
It is well-known that the sequence $a_N := \sum_{n=1, \textrm{ cond}_N}^\infty \frac{1}{n}$ diverges, whence the claim follows.

(\ref{itd:c}) The set $C_N$ is the set of positive integers divisible by $4^{N+1}$,
which obviously has density $1/4^{N+1}$, whence the claim.
\end{proof}

\begin{proposition}
Let $p$ be an odd prime.
Assume the Cohen-Lenstra heuristics, i.e.\ Fundamental Heuristic
Assumption~\ref{fha:cl}.
Then Hypothesis $(H_p)$ is satisfied, and therefore the conclusion of Theorem~\ref{main-theorem} is true
for odd primes.
\end{proposition}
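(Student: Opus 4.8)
The plan is to deduce Hypothesis~$(H_p)$ from Proposition~\ref{prop:cl} together with the elementary density bookkeeping collected in Lemmas~\ref{lem:natden} and~\ref{lem:one}; the word ``therefore'' in the statement is then immediate from the proposition proved earlier in Section~\ref{sec:dih}, which says that $(H_p)$ implies Theorem~\ref{main-theorem}. The crucial observation is that $p$-suitability is implied by a much cruder condition: a finite abelian group~$C$ is $p$-suitable as soon as $\#C$ has a prime divisor~$q$ with $q\neq p$ and $q\nmid p^2-1$, because then $q\mid\exp(C)$, so $C$ has a cyclic quotient of order~$q$, and $h=q$ satisfies the two defining conditions. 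So I would take $S$ to consist of~$p$ together with all primes dividing~$p^2-1$. This is a \emph{finite} set, and it contains~$2$ since $p$ is odd, so $p^2-1$ is even; in particular $S$ has density~$0<1$, so Proposition~\ref{prop:cl} applies and gives that $\cF_S=\{d\in\cF:\#\ppp{S}(\CL(-d))=1\}$ has density~$0$ in~$\cF$. For $d\in\cF\setminus\cF_S$ there is a prime $q\notin S$ dividing $\#\CL(-d)$, and then $q\neq p$ and $q\nmid p^2-1$, so $\CL(-d)$ is $p$-suitable by the observation above. Hence the set of $d\in\cF$ with $\CL(-d)$ $p$-suitable has density~$1$ in~$\cF$.

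Next I would restrict to $d\equiv 3\bmod 4$. The set $\cF_3:=\{d\in\cF:d\equiv 3\bmod 4\}$ is exactly the set of squarefree integers $d\equiv 3\bmod 4$ (for such~$d$, the integer $-d\equiv 1\bmod 4$ is a fundamental discriminant), and it has a positive density in~$\cF$, so Lemma~\ref{lem:natden}(\ref{it:d}) shows that the set $\mathcal{D}$ of those squarefree $d\equiv 3\bmod 4$ for which $\CL(\Q(\sqrt{-d}))$ is $p$-suitable has density~$1$ in the set of squarefree integers $\equiv 3\bmod 4$. It then suffices to prove that $\mathcal{N}:=\{N\in\N:\exists\,d\in\mathcal{D},\ d\mid N\}$ has density~$1$ in~$\N$, since this is precisely~$(H_p)$. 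I would prove it one residue class at a time, which is exactly what the three parts of Lemma~\ref{lem:one} are set up for. Every integer $N\equiv 3\bmod 4$ equals $(2n-1)^2 d$ for a unique $n\geq 1$ and a unique squarefree~$d$, which is then automatically $\equiv 3\bmod 4$; so the integers $\equiv 3\bmod 4$ are the disjoint union over $n\geq 1$ of the scaled copies $(2n-1)^2\mathcal{A}$, where $\mathcal{A}$ denotes the squarefree integers $\equiv 3\bmod 4$. Since $\natden{\mathcal{D}}{\mathcal{A}}=1$, Lemma~\ref{lem:natden}(\ref{it:c}) and~(\ref{it:f}) give $\natden{\bigcup_{n=1}^N(2n-1)^2\mathcal{D}}{A_N}=1$ with $A_N$ as in Lemma~\ref{lem:one}(\ref{itd:a}); combining with $\natden{A_N}{\mathcal{B}}\to 1$ from that lemma (where $\mathcal{B}$ is the set of integers $\equiv 3\bmod 4$) and Lemma~\ref{lem:natden}(\ref{it:e}) shows that $\mathcal{D}_\infty:=\bigcup_{n\geq 1}(2n-1)^2\mathcal{D}$, which is contained in~$\mathcal{N}$, satisfies $\natden{\mathcal{D}_\infty}{\mathcal{B}}=1$.

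The remaining residue classes are handled the same way. For $N\equiv 1\bmod 4$: if $p_i\equiv 3\bmod 4$ is prime and $m\in\mathcal{D}_\infty$ then $p_i m\in\mathcal{N}$, so running the argument with Lemma~\ref{lem:one}(\ref{itd:b}) in place of~(\ref{itd:a}) gives that $\bigcup_i p_i\mathcal{D}_\infty$ has density~$1$ among integers $\equiv 1\bmod 4$; by Lemma~\ref{lem:natden}(\ref{it:f}), $\mathcal{N}$ then has density~$1$ among all odd integers. Finally, the positive integers not divisible by~$4$ are the union of the odd integers and twice the odd integers, so one application of Lemma~\ref{lem:natden}(\ref{it:c}) and~(\ref{it:f}) gives $\mathcal{N}$ density~$1$ among them, and since $\N$ is the disjoint union over $n\geq 0$ of the sets $4^n\mathcal{C}$ (with $\mathcal{C}$ the integers not divisible by~$4$), Lemma~\ref{lem:one}(\ref{itd:c}) together with Lemma~\ref{lem:natden}(\ref{it:c}),~(\ref{it:f}),~(\ref{it:e}) promotes this to $\natden{\mathcal{N}}{\N}=1$. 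This establishes~$(H_p)$, and the conclusion of Theorem~\ref{main-theorem} for odd~$p$ then follows from the proposition of Section~\ref{sec:dih}.

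I expect the one genuinely delicate point to be the reformulation in the first paragraph: phrasing $p$-suitability so that it follows from the crude statement ``$\#\CL(-d)$ has a prime factor outside a fixed finite set'', which is exactly what Proposition~\ref{prop:cl}, hence the Cohen--Lenstra heuristics, controls. Everything afterwards is repeated application of the principle that if $A\subseteq B\subseteq C$ with $\natden AB=1$ then density~$1$ in~$B$ propagates to $\natden AC=\natden BC$; these steps are routine but must be carried out with care, in particular checking at each stage (using the various parts of Lemma~\ref{lem:natden}) that the densities involved exist before one divides by them.
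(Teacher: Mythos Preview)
Your proposal is correct and follows essentially the same route as the paper: take $S$ to be the primes dividing $p(p^2-1)$, apply Proposition~\ref{prop:cl} to get density~$1$ among fundamental discriminants, restrict to squarefree $d\equiv 3\bmod 4$ via Lemma~\ref{lem:natden}(\ref{it:d}), and then propagate to all of~$\N$ by the odd-square decomposition (Lemma~\ref{lem:one}(\ref{itd:a})), multiplication by primes $\equiv 3\bmod 4$ (Lemma~\ref{lem:one}(\ref{itd:b})), multiplication by~$2$, and powers of~$4$ (Lemma~\ref{lem:one}(\ref{itd:c})). Your explicit observation that a prime divisor $q\notin S$ of $\#C$ already yields a cyclic quotient of order~$h=q$ with $p\nmid h$ and $h\nmid p^2-1$ is exactly the point the paper invokes at the end with the words ``directly from the definition of $p$-suitability''.
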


\begin{proof}
Let $S$ be the set of primes dividing $p(p^2-1)$. Let $\cF$ and $\cF_S$
be the sets in Proposition~\ref{prop:cl}.
Let $A = \cF \setminus \cF_S$.

The strategy of the proof is to extend the fact that $A$ has natural density~$1$ in~$\cF$
to all positive integers, by first extending it to the integers $\equiv 3 \mod 4$,
then to those $\equiv 1 \mod 4$ and finally by multiplying by $2$ and powers of~$4$
to all integers.

Define
$$ B_1 := \{d \in \N \;|\; d \equiv 3 \mod 4, d \textrm{ squarefree }\} \subset \cF$$
and $A_1 := B_1 \cap A$.
By Lemma~\ref{lem:natden}~(\ref{it:d}), we have $\natden{A_1}{B_1} = 1$.
Due to the statement on finite unions, Lemma~\ref{lem:natden}~(\ref{it:f}),
we have
$$ \natden{\bigcup_{n=1}^N (2n-1)^2 A_1}{\bigcup_{n=1}^N (2n-1)^2 B_1}=1.$$
Furthermore,
$$ B_2 := \{ n \in \N \;|\; n \equiv 3 \mod 4\} = \bigcup_{n=1}^\infty (2n-1)^2 B_1$$
and by Lemma~\ref{lem:one}~(\ref{itd:a}) also
$$ \lim_{N \to \infty} \natden{\bigcup_{n=1}^N (2n-1)^2 B_1}{B_2} = 1.$$
By Lemma~\ref{lem:natden}~(\ref{it:b}) this immediately implies
$$ \lim_{N \to \infty} \natden{\bigcup_{n=1}^N (2n-1)^2 A_1}{B_2} = 1,$$
whence Lemma~\ref{lem:natden}~(\ref{it:e}) yields
$$ \natden{A_2}{B_2} = 1 \textrm{ with } A_2 = \bigcup_{n=1}^N (2n-1)^2 A_1.$$
We have achieved the first goal, namely, to extend the density one statement
to the integers that are $\equiv 3 \mod 4$.

Next, we multiply the sets $A_2$ and $B_2$ by all the primes $\equiv 3 \mod 4$
in order to pass to the integers that are $\equiv 1 \mod 4$.
Let $B_3 := \{n \in \N \;|\; n \equiv 1 \mod 4\}$. From Lemma~\ref{lem:one}~(\ref{itd:b})
we obtain
$$ \lim_{N \to \infty} \natden{\bigcup_{i=1}^N p_i B_2}{B_3}= 1,$$
whence we get (using Lemma~\ref{lem:natden} (\ref{it:b}) and~(\ref{it:f}))
$$ \lim_{N \to \infty} \natden{\bigcup_{i=1}^N p_i A_2}{B_3}= 1.$$
Setting $A_3 := \bigcup_{i=1}^\infty p_i A_2$,
we obtain from Lemma~\ref{lem:natden}~(\ref{it:e})
$$ \natden{A_3}{B_3}=1.$$

Now we have also achieved our goal for the integers that are $\equiv 1 \mod 4$.
We get those $\equiv 2 \mod 4$ simply by multiplying those that we have treated
so far by~$2$.
Let
$$ A_4 := A_2 \cup A_3 \textrm{ and } B_4 := B_2 \cup B_3.$$
By Lemma~\ref{lem:natden}~(\ref{it:f}), it follows that $\natden{A_4}{B_4}=1$.
Moreover, the density of $A_5 := 2A_4$ in $B_5 := 2 B_4$ is~$1$
by Lemma~\ref{lem:natden}~(\ref{it:a}).

Finally, using the same procedure and Lemma~\ref{lem:one}~(\ref{itd:c}), we find
$$ \natden{A_6}{\N} = 1 \textrm{ with } A_6 := \bigcup_{n=0}^\infty 4^n A_5.$$
Now we note that the set $A_6$ is precisely the set of $n \in \N$
such that there is a squarefree $d \in \N$ such that $d \mid n$ and $d \equiv 3 \mod 4$
with the property that $d \not\in \cF_S$.
Directly from the definition of $p$-suitability it follows that such a~$d$ is $p$-suitable.
\end{proof}

\begin{remark}
We remark that a straightforward generalization of Corollary~3 of~\cite{sound} 
yields that for a given prime~$p$ the set of $d \in \N$ such that $-d$ is a 
fundamental discriminant and the class group of $\Q(\sqrt{-d})$ is a 
$p$-group has density zero in~$\N$.

However, we were unable to extend this result to groups whose orders are only divisible
by primes in some finite set~$S$. This would have sufficed to prove Hypothesis 
$(H_p)$ without assuming the Cohen-Lenstra heuristics.
\end{remark}

\section{Acknowledgments}

This project was started while the second author was visiting the University of Bristol; he would like to thank the University and the Heilbronn Institute for their hospitality. The work was finished while the first author was visiting the Institut f\"ur Experimentelle Mathematik (IEM). He would like to
thank the IEM and Universit\"at Duisburg-Essen for their hospitality.

G.~W.\ was partially supported by the European Research Training Network
{\em Galois Theory and Explicit Methods} MRTN-CT-2006-035495
and by the Sonderforschungsbereich Transregio 45 of the Deutsche Forschungsgemeinschaft.

\vspace*{.2cm}

\noindent L.~J.~P.~Kilford\\
Department of Mathematics\\
University Walk\\
Bristol\\
BS8 1TW\\
United Kingdom\\
E-mail: \url{l.kilford@gmail.com}\\
Web page: \url{http://www.maths.bris.ac.uk/~maljpk/}

\bigskip

\noindent Gabor Wiese\\
Universit\"at Duisburg-Essen\\
Institut f\"ur Experimentelle Mathematik\\
Ellernstra{\ss}e 29\\
45326 Essen\\
Germany\\
E-mail: \url{ gabor.wiese@uni-due.de}\\
Web page: \url{ http://maths.pratum.net/}

\end{document}